\def\BibTeX{{\rm B\kern-.05em{\sc i\kern-.025em b}\kern-.08em
    T\kern-.1667em\lower.7ex\hbox{E}\kern-.125emX}}
\newcommand{\email}[1]{\\ \small{\url{#1}} \\}
\newcommand{\institution}[1]{\\ \parbox{3.0in}{\small{#1}}}
\newcommand{\keywords}[1]{\small\textbf{Keywords: }#1}
\newcommand{\AMSsubj}[1]{\noindent\textbf{AMS subject classifications: }#1}
\newcommand\whenaccepted{Submitted: November~30, 2016;
                         Revised: June~26, 2017;
                         Accepted: July 20, 2017.}
\def\qed{\unskip\kern10pt{\unitlength1pt\linethickness{.4pt}\framebox(6,6){}}}
\newtheorem{lemma}{Lemma}[section]
\newtheorem{theorem}{Theorem}[section]
\newenvironment{proof}{{\it Proof:\/}}{\hfill\qed}
\newtheorem{definition}[theorem]{Definition}
\newtheorem{remark}{Remark}
\title{Accurate method of verified computing for solutions of semilinear heat equations\footnote{\whenaccepted}}
\author{Akitoshi Takayasu\thanks{AT was partially supported by JSPS KAKENHI (15K17596).}
\institution{Faculty of Engineering, Information and Systems, University of Tsukuba, Ibaraki 305-8573}
\email{takitoshi@risk.tsukuba.ac.jp}
\and
Makoto Mizuguchi
\institution{{Faculty of Science and Engineering, Waseda University}, Tokyo 169-8555}
\email{makoto.math@fuji.waseda.jp}
\and
Takayuki Kubo\thanks{TK was partially supported by JSPS KAKENHI (15K04946).}
\institution{Institute of Mathematics, University of Tsukuba, Ibaraki 305-0006}
\email{tkubo@math.tsukuba.ac.jp}
\and
Shin'ichi Oishi\thanks{SO was partially supported by CREST, JST.}
\institution{Department of Applied Mathematics, Faculty of Science and Engineering, Waseda University, Tokyo 169-8555}
\email{oishi@waseda.jp}
}
\date{}
\begin{document}
\maketitle
\begin{abstract}
We provide an accurate verification method for solutions of heat equations with a superlinear nonlinearity.
The verification method numerically proves the existence and local uniqueness of the exact solution in a neighborhood of a numerically computed approximate solution.
Our method is based on a fixed-point formulation using the evolution operator, an iterative numerical verification scheme to extend a time interval in which the validity of the solution can be verified, and rearranged error estimates for avoiding the propagation of an overestimate.
As a result, compared with the previous verification method using the analytic semigroup, our method can enclose the solution for a longer time.
Some numerical examples are presented to illustrate the efficiency of our verification method. 
\end{abstract}
\keywords{interval analysis, verified numerical computation, parabolic partial differential equation, evolution operator}

\AMSsubj{65G40, 65M15, 35K58}


\section{Introduction}\label{sec:intro}
In this paper, we provide an accurate verification method for solutions of the following semilinear heat equations for $J:=(t_0,t_1]\subset\mathbb{R}$  ($0\le t_0<t_1<\infty$) and $\Omega=(0,1)^d\subset\mathbb{R}^d$ ($d=1,2,3$):
\begin{equation}\label{eqn:monJ}
	\left\{
	\begin{array}{ll}
	\partial_{t} u-\Delta u=u^p&\mathrm{in}~J\times{\Omega},\\
	u(t,x)=0,&t\in J,~x\in\partial{\Omega},\\
	u(t_0,x)=u_0(x),&x\in\Omega,
	\end{array}
	\right.
\end{equation}
where $\partial_{t}=\frac{\partial}{\partial t}$,
$\Delta=\frac{\partial^2}{\partial x_1^2}+\dots+\frac{\partial^d}{\partial x_d^2}$,
$D(\Delta)=H^2(\Omega)\cap H^1_0(\Omega)$,
and $u_0\in L^2(\Omega)$ is a given initial function.
We  also require that the exponent $p$ satisfies $1<p<1+\frac{4}{d}$.

The semilinear heat equation~\eqref{eqn:monJ} appears as a canonical nonlinear extension of the heat equation with a monomial nonlinearity.
In mathematical physics, many models involve nonlinear variants of~\eqref{eqn:monJ}.
For example, solutions of the semilinear heat equation represent the heat distribution inside a solid fuel container~\cite{bib:BE1989}.
The blow-up of a solution indicates the ignition phenomenon of the solid fuel, an application of the semilinear heat equation to combustion theory.

Since the semilinear heat equation~\eqref{eqn:monJ} is typical of nonlinear parabolic partial differential equations, there are numerous analytic results for~\eqref{eqn:monJ}.
For example, in~\cite{bib:kaplan}, the solution of~\eqref{eqn:monJ} exists globally in time for small initial data.
In~\cite{bib:Brezis1996}, the solution to the integral equation
\[
	u(t)=e^{(t-t_0)\Delta}u_0+\int_{t_0}^te^{(t-s)\Delta}u(s)^p \, ds
\]
is called the {\em mild solution}\footnote{The family of operators $\left\{e^{t\Delta}\right\}_{t\ge 0}$ denotes the analytic semigroup generated by $\Delta$. For the development of semigroup theory, see~\cite{bib:pazy,bib:yagi}.} of~\eqref{eqn:monJ}.
Such a mild solution is the solution of~\eqref{eqn:monJ} in the class $C\left(J;L^q(\Omega)\right)$, {whose definition will be given later\footnote{We note that all notation including function spaces used throughout this paper will be given in the last part of Section~\ref{sec:intro}.}}, if $q > q_c$ and $q \ge p$ (resp. $q = q_c$ and $q > p$) hold for $q_c = d(p-1)/2$.
Furthermore, there exists $T > t_0$, which depends on $u_0$, such that the mild solution is a unique solution of~\eqref{eqn:monJ} in the class $C\left([t_0,T);L^q(\Omega)\right)$.
A variety of analytical studies (e.g.,~\cite{bib:cazenave1998,bib:quittner2007} and references therein)
give qualitative properties of the solution.
On the other hand, quantitative properties are difficult to obtain by analytic methods alone. 
That is, it is difficult to show how much initial data is small enough for global existence and 
to determine the explicit value of $T$ for the mild solution to be unique solution of~\eqref{eqn:monJ}.

One might use numerical computations to understand quantitative properties.
The numerical analysis of~\eqref{eqn:monJ} has also been studied for parabolic equations (cf.~\cite{bib:fujita2001,bib:thomee2013}).
However, even if numerically computed results seem to converge to the zero function that is one of the steady states of~\eqref{eqn:monJ}, the results of numerical computations do not prove whether these computed solutions are rigorous global solutions.
Therefore, verified computing for the solution of~\eqref{eqn:monJ} would be helpful to obtain rigorous results with quantitative properties.

Verification methods for parabolic partial differential equations {(\cite{bib:Kinoshita2014,bib:Mizuguchi2014,bib:Mizuguchi2017,bib:Nakao2013,bib:Nakao2012}, etc.)} give mathematical proof of the existence and local uniqueness of the exact solution in a neighborhood of a numerically computed approximate solution.
In such a framework, we try to enclose the solution rigorously in a Banach space $X$.
Namely, let $Y$ be a Banach space with respect to the space variable.
Under the assumption that the approximate solution $\omega$ satisfies $\|u_0-\omega(t_0)\|_Y\le \varepsilon_0$ at $t=t_0$,
we rigorously obtain the enclosure $B_J\left(\omega,\rho_{\varepsilon_0}\right)=\left\{u\in X:\|u-\omega\|_{X}\le\rho_{\varepsilon_0}\right\}$ of the exact solution.
The pioneering work of such a verification method has been studied by M. T. Nakao, T. Kinoshita, and T. Kimura~\cite{bib:Kinoshita2014,bib:Nakao2013,bib:Nakao2012} under the setting of the following Banach spaces: $X=L^2\left(J;H_0^1(\Omega)\right)$ and $Y=H^1_0(\Omega)$.
They only consider the case when the initial function is always the zero function, i.e., $u_0\equiv 0$.
The objective of their work is to derive the norm estimate of the inverse operator of linearized parabolic differential operators.
Subsequently, we have proposed another verification method~\cite{bib:Mizuguchi2014,bib:Mizuguchi2017} based on the analytic semigroup generated by $\Delta$.
This framework gives the enclosure of the mild solution under the setting of Banach spaces: $X=L^\infty\left(J;H_0^1(\Omega)\right)$ and $Y=H^1_0(\Omega)$.
Here, the initial function $u_0$ is any function in $H_0^1(\Omega)$. 
In addition, we have introduced a recursive scheme for enclosing the mild solution in several time intervals.
By using this scheme, we can extend a time interval in which the validity of the solution is verified.

Another recent approach of the verification methods for parabolic partial differential equations has been developed from the viewpoint of dynamical systems.
For example, in~\cite{bib:Gameiro2016,bib:Zgliczynski2004,bib:Zgliczynski2010}, the existence of periodic orbits of the Kuramoto-Sivashinsky equation has been proved by making good use of the spectral method.
Besides that, there is a framework on the verification of invariant objects of parabolic partial differential equations~\cite{bib:Figueras2016}.
To understand structures of dynamical systems, it is natural to require rigorously tracking trajectories of initial value problems of parabolic partial differential equations for long time.
However, there are few studies concerning tracking trajectories of the initial value problems in the field of dynamical systems.

Considering the above background, the goal of this paper is to provide an accurate verification method for parabolic partial differential equations under the setting of the Banach spaces $X = C(J;D(\Delta_\mu^\alpha))$ and $Y=L^2(\Omega)$, where $\Delta_\mu^\alpha$ denotes a fractional power of the shifted positive operator defined in Definition \ref{def:fractionalpower}.
Our framework admits non-zero initial functions satisfying $u_0\in L^2(\Omega)$.
Here, the {\em accurate verification method} refers to a verification method that succeeds in enclosing the exact solution for a long time.
In verified computing of solutions to time evolution equations, when we require the enclosure of the solution for a long time, over-estimates accumulate and prevent us from succeeding in the verification (e.g., see~\cite{bib:Mizuguchi2017}).
In particular, for ordinary differential equations, the propagation of over-estimation is called the \textit{wrapping effect} (e.g., see~\cite{bib:Berz1998,bib:kashikashi,bib:lohner,bib:Nedialkov1999,bib:Zgliczynski2002} and references therein.).
The main contribution of this paper is that a more accurate result than the previous result~\cite{bib:Mizuguchi2017} is obtained for enclosing the mild solution of~\eqref{eqn:monJ}.
The results are shown numerically in Section \ref{sec:numericalexam}.
There are two important points that improve the verification method.
One is the fixed-point formulation using the evolution operator, which is introduced in Section \ref{sec:fixedform}, instead of the analytic semigroup used in~\cite{bib:Mizuguchi2017}.
The other is a rearrangement of the computations for avoiding the propagation of over-estimate by carefully handling the product appearing in the error estimate.

The rest of this paper is organized as follows: Notations used throughout this paper are listed in the rest of this section.
In Section \ref{sec:evop}, the evolution operator proposed by H.~Tanabe~\cite{bib:Tanabe1960} and P. E. Sobolevskii~\cite{bib:Sobolevskii1961} is introduced to transform the initial-boundary value problem~\eqref{eqn:monJ} into a fixed-point form.
Then, the fixed-point form is derived so that the existence of its fixed point and the existence of a mild solution of~\eqref{eqn:monJ} are equivalent.
We also prepare several estimates associated with the evolution operator.
In Section \ref{sec:localinc}, the local inclusion theorem whose sufficient condition can be checked numerically is presented in Theorem \ref{thm:maintheorem}.
Subsequently, in Section \ref{sec:concatescheme}, we provide an iterative numerical verification scheme based on Theorem \ref{thm:maintheorem}.
We also introduce a technique for shrinking the propagation of over-estimates based on techniques
for avoiding the {\em wrapping effect} in verification methods for ordinary differential equations.
Finally, we numerically demonstrate the efficiency of the provided verification method in Section~\ref{sec:numericalexam}.

\subsection*{Notation}
\begin{itemize}
\item [$\mathbb{R}$]: the set of real numbers.
\item [$\mathbb{N}$]: the set of natural numbers.
\item [$\mathbb{C}$]: the set of complex numbers.
\item [$L^q(\Omega)$]: the set of {$q$-th} power Lebesgue integrable functions on $\Omega$ for $q\in[1,\infty)$ with the norm
\[
	\|f\|_{L^q}:=\left(\int_\Omega |f(x)|^q \, dx\right)^{1/q}.
\]
\item [$L^{\infty}(\Omega)$]: the set of essentially bounded functions  on $\Omega$ with the norm
\[
	\|f\|_{L^\infty}:=\mathop{\mathrm{ess\,sup}}_{x\in\Omega}|f(x)|.
\]
\item [$(\phi,\psi)_{L^2}$]: the inner product of $L^2(\Omega)$ defined by
\[
	(\phi,\psi)_{L^2}:=\int_\Omega \phi(x)\psi(x) \, dx.
\]
\item [$\|S\|_{L^q,L^r}$]: the operator norm of $S:L^q(\Omega)\to L^r(\Omega)$ ($1\le q,r\le\infty$) {defined by
\[
	\|S\|_{L^q,L^r}:=\sup_{0\not=\varphi\in L^q(\Omega)}\frac{\|S\varphi\|_{L^r}}{\|\varphi\|_{L^q}}.
\]
}
\item [$H^m(\Omega)$]: the $m$-th order Sobolev space of $L^2(\Omega)$.
\item [$H_0^1(\Omega)$]: the subspace of $H^1(\Omega)$ defined by $\{\phi\in H^1(\Omega):\phi=0~\mbox{on}~\partial\Omega\}$,
where $\phi=0~\mbox{on}~\partial\Omega$ is meant in the trace sense.
{The norm of $H^1_0(\Omega)$ is defined by
\[
\|\phi\|_{H^1_0}:=\left(\|\nabla \phi\|_{L^2}^2+\mu\|\phi\|_{L^2}^2\right)^{1/2}
\]
for $\phi\in H^1_0(\Omega)$ and a certain $\mu > 0$.}
\item [$L^{q}\left(J;Y\right)$]: the time-dependent Lebesgue space as a space of $Y$-valued functions on $J$ with the norm
\[
	\|u\|_{L^{q}\left(J;Y\right)}:=
	\left\{
	\begin{array}{ll}
	\displaystyle\left(\int_J \|u(t,\cdot)\|_Y^q \, dt\right)^{1/q},&1\le q<\infty,\\[4mm]
	\displaystyle\mathop{\mathrm{ess\,sup}}_{t\in J}\|u(t,\cdot)\|_{Y},& q=\infty.
	\end{array}\right.
\]
\item [$C\left(J;Y\right)$]: the time-dependent space as a space of bounded $Y$-valued functions on $J$ with the norm
\[
	\|u\|_{C\left(J;Y\right)}:=\sup_{t\in J}\|u(t,\cdot)\|_{Y}.
\]


\end{itemize}

\section{Evolution Operator}\label{sec:evop}

\subsection{Fixed-point Formulations}\label{sec:fixedform}
For a fixed $\mu>0$, let $\Delta_\mu:=-\Delta+\mu$.
As the inverse of the operator $\Delta_\mu$ is a compact self-adjoint operator, the spectral theorem~\cite{bib:Dautray2000} shows that the operator $\Delta_\mu$ has {a positive discrete spectrum}.
Let $\lambda_A:=\lambda_{\min}+\mu$ be the minimal eigenvalue of $\Delta_\mu$, where $\lambda_{\min}$ denotes the minimal eigenvalue of $-\Delta$.
We set $\sigma>0$ satisfying
\[
	\sigma-p\omega(t,x)^{p-1}\ge\mu,~~t\in J,~a.e.~x\in\Omega,
\]
{where $\omega(t,x)$ is a numerically computed approximate solution of~\eqref{eqn:monJ}.}
For $t\in J$, let $A(t):=-\Delta+\left(\sigma-p\omega(t)^{p-1}\right)$, where $\omega(t)\equiv\omega(t,\cdot)$.
The domain of the operator $A(t)$, denoted by $D(A(t))$, is equal to $D(\Delta)$, and there exists $M > 0$ such that 
\begin{eqnarray*}
	\left|(A(t)\phi,\psi)_{L^2}\right|
	&=&\left|\left(\nabla \phi,\nabla \psi\right)_{L^2}+\left(\sigma-p\omega(t)^{p-1})\phi,\psi\right)_{L^2}\right|\\
	&=&\left|\left(\nabla \phi,\nabla \psi\right)_{L^2}+\mu(\phi,\psi)_{L^2}+\left((\sigma-p\omega(t)^{p-1}-\mu)\phi,\psi\right)_{L^2}\right|\\
	&\le&M\|\phi\|_{H^1_0}\|\psi\|_{H^1_0}
\end{eqnarray*}
for $\phi\in D(A(t))$ and $\psi\in H^1_0(\Omega)$.
It also follows that for $\phi\in D(A(t))$,
\begin{eqnarray}\label{eqn:coercive}
	(A(t)\phi,\phi)_{L^2}&=&\left(\nabla \phi,\nabla \phi\right)_{L^2}+\left((\sigma-p\omega(t)^{p-1})\phi,\phi\right)_{L^2}\nonumber\\
	&\ge&\left(\nabla \phi,\nabla \phi\right)_{L^2}+\mu(\phi,\phi)_{L^2}
	=\|\phi\|_{H^1_0}^2.
\end{eqnarray}
Hence,  $-A(t)$ becomes a sectorial operator\footnote{Let $X$ be a Banach space and $A:D(A)\subset X\to X$ a linear closed operator.
If the resolvent set $\rho(A)$ of $A$ contains a sector $S=\{\lambda\in\mathbb{C}:|\arg(\lambda)|<\theta\}$ with $\theta\in(\pi/2,\pi)$, and there exists $M,>,0$ such that $\|\lambda(\lambda I-A)^{-1}\varphi\|_X\le M\|\varphi\|_X$ for $\lambda\in S$ and $\varphi\in X$,
then $A$ is called a sectorial operator.} on $L^2(\Omega)$.
It follows~\cite{bib:pazy} that $-A(t)$ generates the analytic semigroup $\left\{e^{-sA(t)}\right\}_{s\ge0}$ for each $t\in J$.
Furthermore, one can prove that there exists $C>0$ such that
\[
	\left\|\left(A(t)-A(s)\right)A(s)^{-1}\varphi\right\|_{L^2}\le C|t-s|\|\varphi\|_{L^2},~\forall\varphi\in L^2(\Omega),~t,s\in J,
\]
if the approximate solution is a sufficiently smooth with respect to the $t$-variable.
This is proved from the following facts:
From~\eqref{eqn:coercive} and the embedding $H_0^1(\Omega)\hookrightarrow L^2(\Omega)$, there exists $c>0$ such that $\|A(s)^{-1}\varphi\|_{L^2}\le c\|\varphi\|_{L^2}$ for any $\varphi\in L^2(\Omega)$.
For each $t,s\in J$, the mean-value theorem and Taylor's theorem imply
\begin{align*}
	&\left\|\left(A(t)-A(s)\right)A(s)^{-1}\varphi\right\|_{L^2}\\
	&=p\left\|\left(\omega(t)^{p-1}-\omega(s)^{p-1}\right)A(s)^{-1}\varphi\right\|_{L^2}\\
	&=p(p-1)\left\|\int_0^1\left(\theta\omega(t)+(1-\theta)\omega(s)\right)^{p-2}d\theta\left(\omega(t)-\omega(s)\right)A(s)^{-1}\varphi\right\|_{L^2}\\
	&\le p(p-1)\int_0^1\left\|\left(\theta\omega(t)+(1-\theta)\omega(s)\right)^{p-2}\right\|_{L^\infty}d\theta\cdot\left\|\partial_t\omega(\eta)\right\|_{L^\infty}\left|t-s\right|\left\|A(s)^{-1}\varphi\right\|_{L^2}\\
	&\le C|t-s|\|\varphi\|_{L^2},~\forall\varphi\in L^2(\Omega),
\end{align*}
where $\eta\in J$ and
\[
	C=cp(p-1)\sup_{t,s\in J}\int_0^1\left\|\left(\theta\omega(t)+(1-\theta)\omega(s)\right)^{p-2}\right\|_{L^\infty}d\theta\cdot\sup_{\eta\in J}\left\|\partial_t\omega(\eta)\right\|_{L^\infty}.
\]
From these facts, $-A(t)$ generates the {\em evolution operator} $$\left\{U(t,s)\right\}_{t_0\le s\le t\le t_1}$$ on $L^2(\Omega)$~\cite{bib:pazy,bib:Sobolevskii1961,bib:Tanabe1960,bib:yagi}, etc.
The evolution operator is the solution operator of the homogeneous initial value problem
\begin{equation}\label{eqn:homo}
	\left\{
	\begin{array}{ll}
	\partial_{t} u+A(t) u=0,& t_0\le s<t\le t_1,\\
	u(s)=\varphi,&
	\end{array}
	\right.
\end{equation}
where $\varphi\in L^2(\Omega)$.
It gives the formula $u(t)=U(t,s)\varphi$ for representing a solution of~\eqref{eqn:homo}.
For representing the evolution operator, in the 1960's, {H.~Tanabe~\cite{bib:Tanabe1960} and P.~E.~Sobolevskii~\cite{bib:Sobolevskii1961}} independently constructed the evolution operator when the domain $D(A(t))$ is independent of the variable $t$.

By using the evolution operator $\left\{U(t,s)\right\}_{t_0\le s\le t\le t_1}$ generated by $-A(t)$,
we define a nonlinear operator $\mathcal{S}:C(J;L^2(\Omega))\to C(J;L^2(\Omega))$ as
\begin{equation}\label{eqn:Tv}
	(\mathcal{S}(v))(t):=U(t,t_0)v(t_0)+\int_{t_0}^tU(t,s)g(v(s)) \, ds,
\end{equation}
where 
\[
	g(v):=e^{-\sigma(t-t_0)}\bigg\{\left(\omega+e^{\sigma(t-t_0)}v\right)^p-\omega^p-p\omega^{p-1}e^{\sigma(t-t_0)}v+\omega^p-\partial_{t} \omega+\Delta \omega\bigg\}.
\]
Let $v:=e^{-\sigma(t-t_0)}(u-\omega)$.
The main assertion of this paper is that {\em $u$ is a mild solution of~\eqref{eqn:monJ} if and only if $v$ is a {fixed point} of the operator $\mathcal{S}$ in an appropriate function space}.
In Theorem \ref{thm:maintheorem}, we will give a sufficient condition for guaranteeing the existence and local uniqueness of such a {fixed point}, which can be checked numerically.

\subsection{Estimates Associated with the Evolution Operator}

\begin{lemma}\label{lem:U_es}
For $0\le s<t$, let $\left\{U(t,s)\right\}_{0\le s<t}$ be an evolution operator generated by $-A(t)$.
If there exists a {bound} $\eta$ such that
\[
	\eta\le\inf_{\phi\in D(\Delta)}\frac{\left(A(r)\phi,\phi\right)_{L^2}}{(\phi,\phi)_{L^2}},~\forall r\in[s,t],
\]
then the following estimate holds for $0\le s<t$:
\[
	\left\|U(t,s)\varphi\right\|_{L^2}\le e^{-(t-s)\eta}\|\varphi\|_{L^2},~\forall\varphi\in L^2(\Omega).
\]
\end{lemma}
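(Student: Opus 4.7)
The plan is to reduce the statement to a Gronwall argument applied to the squared $L^2$ norm of the solution of the homogeneous Cauchy problem \eqref{eqn:homo}. Set $u(t):=U(t,s)\varphi$, so that by the defining property of the evolution operator, $u$ satisfies $\partial_t u + A(t)u = 0$ on $(s,t_1]$ with $u(s)=\varphi$. The key identity I would exploit is the chain rule
\[
	\frac{d}{dr}\|u(r)\|_{L^2}^2 = 2(\partial_r u(r),u(r))_{L^2} = -2(A(r)u(r),u(r))_{L^2},\qquad r\in (s,t].
\]

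Next I would insert the hypothesis on $A(r)$. Since $u(r)\in D(A(r))=D(\Delta)$ for $r>s$ (by the smoothing property of the parabolic evolution operator constructed by Tanabe and Sobolevskii), the assumed Rayleigh-quotient lower bound yields $(A(r)u(r),u(r))_{L^2}\ge \eta\,\|u(r)\|_{L^2}^2$, hence the differential inequality
\[
	\frac{d}{dr}\|u(r)\|_{L^2}^2 \;\le\; -2\eta\,\|u(r)\|_{L^2}^2,\qquad r\in(s,t].
\]
Multiplying by $e^{2\eta(r-s)}$ shows $r\mapsto e^{2\eta(r-s)}\|u(r)\|_{L^2}^2$ is non-increasing on $[s,t]$, so $\|u(t)\|_{L^2}^2\le e^{-2\eta(t-s)}\|\varphi\|_{L^2}^2$, and taking square roots gives the claim.

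The main technical obstacle is justifying the differentiation step for a general $\varphi\in L^2(\Omega)$, since a priori $u$ need not be classically differentiable at $r=s$. I would handle this by a standard density/approximation argument: first prove the inequality for $\varphi\in D(\Delta)$, where the Tanabe-Sobolevskii theory guarantees that $u\in C^1((s,t];L^2(\Omega))\cap C([s,t];D(\Delta))$ so the chain-rule computation is rigorous; then extend to arbitrary $\varphi\in L^2(\Omega)$ by approximating $\varphi$ in $L^2$ by a sequence $\varphi_n\in D(\Delta)$, applying the estimate to $u_n(t)=U(t,s)\varphi_n$, and passing to the limit using the strong continuity of $U(t,s)$ on $L^2(\Omega)$. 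The monotonicity of $r\mapsto e^{2\eta(r-s)}\|u(r)\|_{L^2}^2$ on $(s,t]$ together with continuity at $r=s$ takes care of the boundary term.
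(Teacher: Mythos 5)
Your proof is correct and takes essentially the same route as the paper: an energy estimate $\tfrac{1}{2}\tfrac{d}{dt}\|u(t)\|_{L^2}^2=-(A(t)u(t),u(t))_{L^2}\le-\eta\|u(t)\|_{L^2}^2$ followed by Gronwall's inequality. The only difference is that you supply the density/approximation argument justifying differentiation of $\|u(r)\|_{L^2}^2$ for general $\varphi\in L^2(\Omega)$, a step the paper leaves implicit by citing the standard energy-estimate reference.
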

\begin{proof}
Assume that $u(t)$ is the solution of~\eqref{eqn:homo}.
The energy estimate {\cite{bib:Evans1998}} {implies}
\begin{align*}
\frac{1}{2}\frac{d}{dt}\|u(t)\|_{L^2}^2&=\left(\partial_t u(t),u(t)\right)_{L^2}\\
&=-\left(A(t) u(t),u(t)\right)_{L^2}\\
&\le-\eta\|u(t)\|_{L^2}.
\end{align*}
From the Gronwall inequality {\cite{bib:Gronwall1919}}, it follows that
\[
	\|u(t)\|_{L^2}\le e^{-(t-s)\eta}\|\varphi\|_{L^2}.
\]
\end{proof}

\begin{remark}
When $A(t)=-\Delta+\left(\sigma-p\omega(t)^{p-1}\right)$, 
$\eta$ of Lemma~\ref{lem:U_es} can be taken as $\lambda_A=\lambda_{\min}+\mu$ from~\eqref{eqn:coercive}.
\end{remark}

Unless otherwise noted, we fix $A(t)=-\Delta+\left(\sigma-p\omega(t)^{p-1}\right)$ and $\Delta_\mu=-\Delta+\mu$.
We derive a formula of the evolution operator $\left\{U(t,s)\right\}_{t_0\le s\le t\le t_1}$ by using the analytic semigroup generated by $-A(s)$.
From~\eqref{eqn:homo} we have for any $s$ and $t$ satisfying $t_0 \le s \le t \le t_1$, 
\begin{align*}
	\partial_{t} u+ A(t)u=0\iff\partial_{t} u+ A(s)u=-(A(t)-A(s))u.
\end{align*}
Since $-A(s)$ generates the analytic semigroup $\{e^{-tA(s)}\}_{t\ge0}$, we have
\begin{align*}
	u(t)=&e^{-(t-s)A(s)}\varphi+\int_{s}^te^{-(t-r)A(s)}\{-(A(r)-A(s))u(r)\} \, dr \\
	=&e^{-(t-s)A(s)}\varphi+\int_{s}^te^{-(t-r)A(s)}p\left(\omega(r)^{p-1}-\omega(s)^{p-1}\right)u(r) \, dr.
\end{align*}
That is, the evolution operator satisfies the operator-valued integral equation
\begin{equation}\label{eqn:evop_form}
	U(t,s)=e^{-(t-s)A(s)}+\int_{s}^te^{-(t-s)A(s)}p\left(\omega(r)^{p-1}-\omega(s)^{p-1}\right)U(r,s) \, dr.
\end{equation}
By using~\eqref{eqn:evop_form} in the following, we define the fractional power of $\Delta_\mu$ and introduce several estimates associated with the evolution operator.

\begin{definition}\label{def:fractionalpower}
For $i\in\mathbb{N}$, let $\lambda_i>0$ be the eigenvalue of $\Delta_\mu$.
The function $\phi_i$ denotes an eigenfunction of $\Delta_\mu$ corresponding to $\lambda_i$ satisfying $(\phi_i,\phi_j)_{L^2}=\delta_{ij}$, where $\delta_{ij}$ is Kronecker's delta.
We describe the eigenvalue decomposition of $\varphi\in L^2(\Omega)$ as $\varphi=\sum_{i=1}^\infty c_i\phi_i$, where $c_i=(\varphi,\phi_i)_{L^2}$.
For $\alpha\in(0,1)$, we define the fractional power of $\Delta_\mu$ as
\[
	\Delta_\mu^\alpha \varphi:=\sum_{i=1}^\infty\lambda_i^{\alpha}c_i\phi_i,~D(\Delta_\mu^\alpha):=\left\{\varphi=\sum_{i=1}^\infty c_i\phi_i\in L^2(\Omega):\sum_{i=1}^\infty c_i^2\lambda_i^{2\alpha}<\infty\right\}.
\]
\end{definition}

\begin{lemma}\label{lem:fractional_pow_norm}
For $\alpha\in(0,1)$ and each $t\in J$, {it follows that}
\[
	\left\|\Delta_\mu^\alpha \phi\right\|_{L^2}\le\left\|A(t)^\alpha \phi\right\|_{L^2},~\forall \phi\in D(A(t)).
\]
\end{lemma}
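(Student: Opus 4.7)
The strategy is to reduce the fractional power norm comparison to the base-operator inequality $\Delta_\mu \le A(t)$ (which follows directly from the choice of the shift $\sigma$) and then appeal to operator-monotonicity of fractional powers. Since $\sigma - p\omega(t,x)^{p-1} \ge \mu$ on $J \times \Omega$, the potential $V(t) := \sigma - p\omega(t)^{p-1} - \mu$ is a nonnegative bounded multiplication operator, and $A(t) = \Delta_\mu + V(t)$. Consequently, for every $\phi \in D(\Delta)$,
\[
(\Delta_\mu\phi, \phi)_{L^2} \le (\Delta_\mu \phi, \phi)_{L^2} + (V(t)\phi, \phi)_{L^2} = (A(t)\phi, \phi)_{L^2},
\]
so that $0 < \Delta_\mu \le A(t)$ as positive self-adjoint operators on $L^2(\Omega)$ sharing the common domain $D(\Delta) = H^2(\Omega) \cap H_0^1(\Omega)$.

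With the operator inequality in hand, the L\"owner--Heinz theorem (operator monotonicity of $x \mapsto x^{\beta}$ on $[0,\infty)$ for $\beta \in [0,1]$) applied with $\beta = 2\alpha$ gives $\Delta_\mu^{2\alpha} \le A(t)^{2\alpha}$ whenever $2\alpha \le 1$, and hence
\[
\|\Delta_\mu^\alpha \phi\|_{L^2}^2 = (\Delta_\mu^{2\alpha}\phi, \phi)_{L^2} \le (A(t)^{2\alpha}\phi, \phi)_{L^2} = \|A(t)^\alpha \phi\|_{L^2}^2
\]
for every $\phi \in D(A(t))$ and every $\alpha \in (0, 1/2]$. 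For the remaining range $\alpha \in (1/2, 1)$, I would invoke the Heinz inequality in its norm form: if $A, B$ are nonnegative self-adjoint operators with $D(A) \subseteq D(B)$ and $\|B\phi\|_{L^2} \le \|A\phi\|_{L^2}$ for all $\phi \in D(A)$, then $\|B^\alpha \phi\|_{L^2} \le \|A^\alpha \phi\|_{L^2}$ for every $\alpha \in [0,1]$. Applied with $B = \Delta_\mu$ and $A = A(t)$, this upgrades the whole task to the single endpoint estimate
\[
\|\Delta_\mu \phi\|_{L^2} \le \|A(t) \phi\|_{L^2}, \qquad \phi \in D(\Delta).
\]

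The main analytic obstacle is this endpoint inequality, since expanding $A(t) = \Delta_\mu + V$ and integrating by parts using $\phi|_{\partial\Omega} = 0$ gives
\[
\|A(t)\phi\|_{L^2}^2 - \|\Delta_\mu\phi\|_{L^2}^2 = 2\!\int_\Omega V|\nabla\phi|^2\,dx + 2\mu\!\int_\Omega V\phi^2\,dx + \|V\phi\|_{L^2}^2 - \int_\Omega (\Delta V)\phi^2\,dx,
\]
where the first three contributions are manifestly nonnegative but the sign of the last term is not determined a priori. To control it, I would exploit the smoothness of $\omega(t,\cdot)$ already required to construct the evolution operator (the same regularity that yielded the Lipschitz-in-$t$ estimate for $A(t)A(s)^{-1}$ with constant $C$ derived earlier in this section) to obtain a uniform bound on $\|\Delta V\|_{L^\infty}$, and then absorb the indefinite term into $2\mu\!\int V\phi^2$ and $2\!\int V|\nabla\phi|^2$ by a weighted Cauchy--Schwarz / Poincar\'e estimate, the coercivity~\eqref{eqn:coercive} supplying the necessary headroom. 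Once this endpoint is secured, the Heinz inequality delivers the lemma on the full range $\alpha \in (0,1)$.
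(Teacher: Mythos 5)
Your first paragraph is exactly what the paper's (one-line) proof intends: from~\eqref{eqn:coercive}, $(\Delta_\mu\phi,\phi)_{L^2}\le(A(t)\phi,\phi)_{L^2}$, i.e.\ $\Delta_\mu\le A(t)$ as forms on the common domain, and L\"owner--Heinz then yields $\Delta_\mu^{2\alpha}\le A(t)^{2\alpha}$, hence $\|\Delta_\mu^\alpha\phi\|_{L^2}\le\|A(t)^\alpha\phi\|_{L^2}$, \emph{for} $2\alpha\le1$. That part is correct and is the argument the authors have in mind.

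The extension to $\alpha\in(1/2,1)$ is where your proposal has a genuine gap. You correctly observe that the Heinz norm inequality would do the job \emph{if} the endpoint $\|\Delta_\mu\phi\|_{L^2}\le\|A(t)\phi\|_{L^2}$ held, and you correctly isolate the troublesome term $-\int_\Omega(\Delta V)\phi^2\,dx$ in the expansion of $\|A(t)\phi\|^2-\|\Delta_\mu\phi\|^2$. But the proposed remedy --- bound $\|\Delta V\|_{L^\infty}$ and ``absorb'' it into $2\mu\int V\phi^2+2\int V|\nabla\phi|^2$ using the coercivity~\eqref{eqn:coercive} --- does not go through. To absorb $\int(\Delta V)\phi^2$ into $2\mu\int V\phi^2$ you would need something like a pointwise bound $\Delta V\lesssim V$, which fails wherever $V=\sigma-p\omega^{p-1}-\mu$ touches zero (permitted by the hypotheses) while $\Delta V>0$; absorbing into $\int V|\nabla\phi|^2$ has the same defect. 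The coercivity~\eqref{eqn:coercive} is a form-level comparison between $A(t)$ and $\Delta_\mu$ and provides no control over the competing \emph{operator-norm-squared} quantity; it does not supply the claimed headroom. In short, the endpoint inequality is not a consequence of the stated hypotheses, and the Heinz upgrade therefore cannot be launched.

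It is worth noting that this shortfall is harmless to the paper: in Lemma~\ref{lem:U_ts_es} and Theorem~\ref{thm:maintheorem} the exponent is restricted to $\alpha\in\bigl(\tfrac{d(p-1)}{4p},\tfrac1p\bigr)$ with $p\ge2$, so $\alpha<1/2$ always, and the L\"owner--Heinz half of your argument already covers every application. The statement ``$\alpha\in(0,1)$'' in the lemma is broader than what the one-line proof via~\eqref{eqn:coercive} actually delivers; your proposal matches the paper for $\alpha\le1/2$ and correctly flags, but does not close, the $\alpha>1/2$ case.
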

{From~\eqref{eqn:coercive}, it is easy to prove Lemma \ref{lem:fractional_pow_norm}.}

The next lemma is about the embedding $D\left(\Delta_\mu^{\alpha}\right)\hookrightarrow L^p(\Omega)$. It has been shown in several textbooks (e.g.,~\cite{bib:pazy}).

\begin{lemma}\label{lem:frac_emb}
Let $\Omega\subset \mathbb{R}^d~(d\in\mathbb{N})$ be a bounded domain.
Let $\alpha$ satisfy $\alpha>\frac{d(p-2)}{4p}$ for $2<p\leq\infty$, with $\alpha>\frac{d}{4}$ in the case $p=\infty$.
It holds for any $\phi\in D\left(\Delta_\mu^{\alpha}\right)$
\[ 
	\|\phi\|_{L^p}\le C_{p,\alpha}\|\Delta_\mu^{\alpha}\phi\|_{L^2},~~~
	C_{p,\alpha}:=\frac{\Gamma\left(\alpha-\frac{d(p-2)}{4p}\right)}{(4\pi)^{\frac{d(p-2)}{4p}}\Gamma(\alpha)}\min_{0<\beta\leq1}\zeta(\beta),
\]
where $\zeta(\beta)=\beta^{-\frac{d(p-2)}{4p}}((1-\beta)\lambda_{\min}+\mu)^{-\left(\alpha-\frac{d(p-2)}{4p}\right)}$, $\lambda_{\min}$ is the minimal eigenvalue of $-\Delta$, and $\Gamma$ denotes the Gamma function
\[
	\Gamma(x) = \int_{0}^{\infty}t^{x-1}e^{-t} \, dt,~~x>0.
\]
\end{lemma}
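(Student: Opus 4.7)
The plan is to represent $\phi = \Delta_\mu^{-\alpha}\psi$ with $\psi := \Delta_\mu^\alpha \phi$ via the integral formula for negative fractional powers,
\[
\Delta_\mu^{-\alpha}\psi = \frac{1}{\Gamma(\alpha)}\int_0^\infty t^{\alpha-1}\, e^{-t\Delta_\mu}\psi \, dt,
\]
which follows directly from the spectral decomposition in Definition~\ref{def:fractionalpower} together with the identity $\int_0^\infty t^{\alpha-1}e^{-t\lambda}\,dt = \Gamma(\alpha)\lambda^{-\alpha}$ applied eigenvalue-by-eigenvalue. Taking $L^p$-norms and applying Minkowski's inequality in integral form then reduces the lemma to a sufficiently sharp $L^2\to L^p$ bound on the shifted Dirichlet heat semigroup $e^{-t\Delta_\mu} = e^{-t\mu}\,e^{t\Delta}$, weighted by $t^{\alpha-1}$ and integrated over $t\in(0,\infty)$.

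The core step is to establish, for every $\beta\in(0,1]$ and with $\gamma := d(p-2)/(4p)$, the semigroup bound
\[
\|e^{-t\Delta_\mu}\psi\|_{L^p(\Omega)} \le (4\pi\beta t)^{-\gamma}\,e^{-t[(1-\beta)\lambda_{\min}+\mu]}\,\|\psi\|_{L^2(\Omega)}.
\]
I would factor $e^{-t\Delta_\mu} = e^{-t\mu}\,e^{\beta t\Delta}\,e^{(1-\beta)t\Delta}$ using the semigroup property, control the last factor on $L^2(\Omega)$ by $e^{-(1-\beta)t\lambda_{\min}}$ via the spectral gap of $-\Delta$ with Dirichlet conditions, and control $e^{\beta t\Delta}$ from $L^2(\Omega)$ into $L^p(\Omega)$ by dominating the Dirichlet heat kernel pointwise by the free-space Gaussian $K_\tau(x)=(4\pi\tau)^{-d/2}e^{-|x|^2/(4\tau)}$. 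The latter domination is obtained by splitting $\psi$ into its positive and negative parts, extending by zero to $\mathbb{R}^d$, and invoking positivity preservation of the Dirichlet semigroup. Young's convolution inequality with the conjugate exponent $r$ satisfying $1/r = 1/2+1/p$, combined with the log-convexity bound $\|K_\tau\|_{L^r}\le\|K_\tau\|_{L^1}^{1/r}\|K_\tau\|_{L^\infty}^{1-1/r}=(4\pi\tau)^{-\gamma}$ (applied with $\tau=\beta t$, and covering $p=\infty$ uniformly provided $\alpha>d/4$), yields the displayed factor $(4\pi\beta t)^{-\gamma}$.

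Substituting this estimate into the integral representation and using
\[
\int_0^\infty t^{\alpha-\gamma-1}e^{-ct}\,dt = \Gamma(\alpha-\gamma)\,c^{-(\alpha-\gamma)},\qquad c := (1-\beta)\lambda_{\min}+\mu > 0,
\]
which converges precisely because $\alpha>\gamma$ by hypothesis, gives
\[
\|\phi\|_{L^p} \le \frac{\Gamma(\alpha-\gamma)}{(4\pi)^\gamma\,\Gamma(\alpha)}\,\zeta(\beta)\,\|\Delta_\mu^\alpha\phi\|_{L^2}.
\]
Since the left-hand side is independent of $\beta\in(0,1]$, minimizing the right-hand side over $\beta$ produces exactly the constant $C_{p,\alpha}$ stated in the lemma. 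The main technical obstacle will be retaining the sharp numerical constant $(4\pi)^{-\gamma}$: a direct computation of $\|K_\tau\|_{L^r}$ introduces a spurious factor $r^{-d/(2r)}$, which is why the interpolation of Gaussian $L^r$-norms between $L^1$ and $L^\infty$ is essential; likewise, the domain-monotonicity transfer from $\mathbb{R}^d$ to the Dirichlet problem on $\Omega$ is classical but must be carried out on positive and negative parts separately so as not to degrade the same constant.
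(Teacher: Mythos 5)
Your proposal follows essentially the same route as the paper's proof: representing $\phi=\Delta_\mu^{-\alpha}\psi$ via the Bochner integral $\Gamma(\alpha)^{-1}\int_0^\infty t^{\alpha-1}e^{-t\Delta_\mu}\psi\,dt$, applying Minkowski's integral inequality, splitting $e^{-t\Delta_\mu}=e^{-\beta t\Delta_\mu}e^{-(1-\beta)t\Delta_\mu}$, controlling the $L^2\to L^p$ part of $e^{\beta t\Delta}$ by the Gaussian kernel, the $L^2\to L^2$ part by the spectral gap, and finally integrating in $t$ and minimizing over $\beta\in(0,1]$. The one place you diverge is that you spell out a proof of the auxiliary bound $\|e^{t\Delta}\varphi\|_{L^p}\le(4\pi t)^{-d/(2r)}\|\varphi\|_{L^q}$ (via extension by zero, positivity of the Dirichlet semigroup, domination by the free Gaussian, and Young's inequality), whereas the paper simply cites it as known; this is a reasonable thing to include.

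One small correction is in order, though it does not affect the validity of the argument: you claim the direct computation of $\|K_\tau\|_{L^r}$ introduces a ``spurious'' factor $r^{-d/(2r)}$ that must be removed by $L^1$--$L^\infty$ interpolation. In fact, for the relevant exponent ($r>1$) that extra factor satisfies $r^{-d/(2r)}<1$, so the direct Young computation gives a \emph{sharper} constant than $(4\pi\tau)^{-\gamma}$, not a worse one; the interpolation step only serves to reproduce the paper's slightly cruder but cleaner constant. Either way the stated $C_{p,\alpha}$ is a valid upper bound, so the lemma follows.
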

The proof of Lemma \ref{lem:frac_emb} is {similar to} the proof in~\cite{bib:Mizuguchi_nolta2016}.\\[2mm]
\begin{proof}
Let $\{e^{-t\Delta_{\mu}}\}_{t\ge 0}$ be the analytic semigroup on $L^2(\Omega)$ generated by $-\Delta_{\mu}$.
For $\beta>0$, it is known that
\begin{equation}\label{gammahan}
\Delta_{\mu}^{-\beta}\varphi=\Gamma(\beta)^{-1}\int_{0}^{\infty}t^{\beta-1}e^{-t\Delta_{\mu}}\varphi \, dt
\end{equation}
holds~\cite{bib:pazy,bib:yagi} for $\varphi\in L^2(\Omega)$.
For $p,q,r>0$ satisfying $1\leq q<p\leq \infty$ and $\frac{1}{r}=\frac{1}{q}-\frac{1}{p}$,
the following estimate associated with the analytic semigroup $\{e^{t\Delta}\}_{t_\ge0}$ generated by
$\Delta$ holds:
\begin{equation}\label{eqn:Lq-es}
\|e^{t\Delta}\varphi\|_{L^p}\leq (4\pi t)^{-\frac{d}{2r}}\|\varphi\|_{L^q},~~~\forall \varphi\in L^q(\Omega),
\end{equation}
where we set $1/\infty=0$.

For {$2<p\leq\infty$}, $\alpha>\frac{d(p-2)}{4p}$, $0<\beta\le1$, and {any $\phi\in D\left(\Delta_{\mu}^{\alpha}\right)$},
\eqref{gammahan} implies
\begin{align*}
\|\phi\|_{L^p}
&=\|\Delta_\mu^{-\alpha}\Delta_\mu^{\alpha}\phi\|_{L^p}\\
&=\left\|\Gamma(\alpha)^{-1}\int_{0}^{\infty}t^{\alpha-1}e^{-t\Delta_{\mu}}\Delta_\mu^{\alpha}\phi \, dt\right\|_{L^p}\\
&\leq\Gamma(\alpha)^{-1}\int_{0}^{\infty}t^{\alpha-1}\|e^{-t\Delta_{\mu}}\Delta_{\mu}^{\alpha}\phi\|_{L^p} \, dt\\
&\leq\Gamma(\alpha)^{-1}\int_{0}^{\infty}t^{\alpha-1}\|e^{-t\Delta_{\mu}}\|_{L^2,L^p}\|\Delta_{\mu}^{\alpha}\phi\|_{L^2} \, dt\\
&=\Gamma(\alpha)^{-1}\int_{0}^{\infty}t^{\alpha-1}\|e^{-\beta t\Delta_{\mu}}e^{-(1-\beta)t\Delta_{\mu}}\|_{L^2,L^p}\|\Delta_{\mu}^{\alpha}\phi\|_{L^2} \, dt\\
&\leq\Gamma(\alpha)^{-1}\int_{0}^{\infty}t^{\alpha-1}\|e^{-\beta t\Delta_{\mu}}\|_{L^2,L^p}\|e^{-(1-\beta)t\Delta_{\mu}}\|_{L^2,L^2}\|\Delta_{\mu}^{\alpha}\phi\|_{L^2} \, dt,
\end{align*}
where we used the semigroup property $e^{-t\Delta_{\mu}}=e^{-\beta t\Delta_{\mu}}e^{-(1-\beta)t\Delta_{\mu}}$ (e.g.,~\cite{bib:pazy}).
One can see that $e^{-t\Delta_{\mu}}=e^{-\mu t} e^{t\Delta}$.
Setting $q=2$ in~\eqref{eqn:Lq-es} and $\frac{d}{2r}=\frac{d(p-2)}{4p}$, we have
\begin{align*}
	&\|\phi\|_{L^p}\\
	&\le\Gamma(\alpha)^{-1}\int_{0}^{\infty}t^{\alpha-1}(4\pi\beta t)^{-\frac{d(p-2)}{4p}}e^{-\beta\mu t}e^{-t(1-\beta)(\lambda_{\min}+\mu)}\|\Delta_\mu^\alpha \phi\|_{L^2} \, dt\\
	&=(4\pi\beta)^{-\frac{d(p-2)}{4p}}\Gamma(\alpha)^{-1}\int_{0}^{\infty}t^{\alpha-1-\frac{d(p-2)}{4p}}e^{-t((1-\beta)\lambda_{\min}+\mu)} \, dt~\|\Delta_\mu^\alpha \phi\|_{L^2}\\
	&=(4\pi\beta)^{-\frac{d(p-2)}{4p}}\Gamma(\alpha)^{-1}\left(\frac{1}{(1-\beta)\lambda_{\min}+\mu}\right)^{\alpha-\frac{d(p-2)}{4p}}\Gamma\left(\alpha-\frac{d(p-2)}{4p}\right)\|\Delta_\mu^\alpha \phi\|_{L^2}\\
	&=\frac{\Gamma\left(\alpha-\frac{d(p-2)}{4p}\right)}{(4\pi)^{\frac{d(p-2)}{4p}}\Gamma(\alpha)}\beta^{-\frac{d(p-2)}{4p}}((1-\beta)\lambda_{\min}+\mu)^{-\left(\alpha-\frac{d(p-2)}{4p}\right)}\|\Delta_\mu^\alpha \phi\|_{L^2}
\end{align*}
for any $\phi\in D\left(\Delta_{\mu}^{\alpha}\right)$.
Hence, setting $\zeta(\beta)=\beta^{-\frac{d(p-2)}{4p}}((1-\beta)\lambda_{\min}+\mu)^{-\left(\alpha-\frac{d(p-2)}{4p}\right)}$,
\[
	\|\phi\|_{L^p}\leq\frac{\Gamma\left(\alpha-\frac{d(p-2)}{4p}\right)}{(4\pi)^{\frac{d(p-2)}{4p}}\Gamma(\alpha)}\min_{0<\beta\leq1}\zeta(\beta)\|\Delta_\mu^\alpha \phi\|_{L^2},~\forall\phi\in D(\Delta_{\mu}^{\alpha}).
\]
\end{proof}


\begin{lemma}\label{lem:semigroup_es2}
For each $t\in J$, let $\left\{e^{-sA(t)}\right\}_{s\ge0}$ be an analytic semigroup generated by $-A(t)$.
For $\alpha\in(0,1)$ and $\beta\in(0,1]$, it follows that for any $\varphi\in L^2(\Omega)$
\begin{align*}
	\left\|A(t)^{\alpha}e^{-sA(t)}\varphi\right\|_{L^2}\le\left(\frac{\alpha}{e\beta}\right)^{\alpha}s^{-\alpha}e^{-s(1-\beta)\lambda_A}\|\varphi\|_{L^2}.
\end{align*}
\end{lemma}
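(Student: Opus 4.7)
The plan is to exploit the semigroup property $e^{-sA(t)}=e^{-\beta s A(t)}\,e^{-(1-\beta)s A(t)}$ for the fixed parameter $\beta\in(0,1]$, in order to decouple the $s^{-\alpha}$ singularity produced by the fractional power from the exponential decay driven by $\lambda_A$. Concretely, I would write
\[
A(t)^{\alpha}e^{-sA(t)}\varphi=\bigl(A(t)^{\alpha}e^{-\beta s A(t)}\bigr)\bigl(e^{-(1-\beta)s A(t)}\varphi\bigr)
\]
and estimate the two factors separately, then take the product.

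For the first factor I would use the fact that $A(t)=-\Delta+(\sigma-p\omega(t)^{p-1})$ is self-adjoint on $L^{2}(\Omega)$ with $D(A(t))=D(\Delta)$, and that the coercivity estimate~\eqref{eqn:coercive} places its spectrum in $[\lambda_{A},\infty)$. By spectral calculus,
\[
\|A(t)^{\alpha}e^{-\beta s A(t)}\|_{L^{2},L^{2}}\le\sup_{\lambda\ge\lambda_{A}}\lambda^{\alpha}e^{-\beta s\lambda}\le\sup_{\lambda>0}\lambda^{\alpha}e^{-\beta s\lambda}=\Bigl(\frac{\alpha}{e\beta}\Bigr)^{\!\alpha}s^{-\alpha},
\]
where the unconstrained maximum of $\lambda\mapsto\lambda^{\alpha}e^{-\beta s\lambda}$ is attained at $\lambda=\alpha/(\beta s)$. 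This is the step that produces the precise constant $(\alpha/(e\beta))^{\alpha}$ appearing in the statement.

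For the second factor I would invoke Lemma~\ref{lem:U_es} in its autonomous specialization: freezing the coefficient at the value $t$ turns the evolution operator into the analytic semigroup $e^{-(1-\beta)sA(t)}$, and the remark following Lemma~\ref{lem:U_es}, together with~\eqref{eqn:coercive}, lets one take $\eta=\lambda_{A}$. Hence
\[
\|e^{-(1-\beta)s A(t)}\varphi\|_{L^{2}}\le e^{-(1-\beta)s\lambda_{A}}\|\varphi\|_{L^{2}}.
\]
Multiplying the two bounds gives the claimed inequality. There is no serious obstacle here; the only point requiring care is the $\beta$-split itself, which is exactly the device that lets one trade the polynomial blow-up $s^{-\alpha}$ of the fractional-power factor against the exponential decay provided by the remaining semigroup factor, so that both a singular rate and a sharp exponential are displayed in the final constant.
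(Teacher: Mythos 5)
Your proof is correct and follows essentially the same route as the paper: both rely on the self-adjointness of $A(t)$ with spectrum contained in $[\lambda_A,\infty)$ and a $\beta$-split of the exponential, the only cosmetic difference being that you perform the split at the operator level via the semigroup property while the paper performs it at the scalar level inside a single spectral supremum, and you cite the autonomous specialization of Lemma~\ref{lem:U_es} for the second factor where the paper just reads off $\sup_{x\in\sigma(A(t))}e^{-s(1-\beta)x}\le e^{-s(1-\beta)\lambda_A}$ directly. The computation of the sharp constant $(\alpha/(e\beta))^\alpha s^{-\alpha}$ by maximizing $\lambda\mapsto\lambda^\alpha e^{-\beta s\lambda}$ is identical.
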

\begin{proof}
Since all the values from the spectrum of $A(t)$ are positive, the spectral mapping theorem implies
\begin{align*}
	\left\|A(t)^{\alpha}e^{-sA(t)}\varphi\right\|_{L^2}&=\sup_{x\in\sigma(A(t))}\left(x^\alpha e^{-sx}\right)\|\varphi\|_{L^2}\\
	&\le\sup_{x\in\sigma(A(t))}\left(x^\alpha e^{-s\beta x}\right)\sup_{x\in\sigma(A(t))}\left(e^{-s(1-\beta)x}\right)\|\varphi\|_{L^2}\\
	&\le \left(\frac{\alpha}{e\beta}\right)^{\alpha}s^{-\alpha}e^{-s(1-\beta)\lambda_A}\|\varphi\|_{L^2},~\forall \varphi\in L^2(\Omega),
\end{align*}
where $\sigma(A(t))$ denotes the spectrum of $A(t)$, and we used the inequalities
\[
	\sup_{x\in\sigma(A(t))}\left(x^\alpha e^{-s\beta x}\right)\le\left(\frac{\alpha}{e\beta}\right)^{\alpha}s^{-\alpha}~\mbox{and}~\sup_{x\in\sigma(A(t))}\left(e^{-s(1-\beta)x}\right)\le e^{-s(1-\beta)\lambda_A}.
\]
\end{proof}

\begin{lemma}\label{lem:U_ts_es}
For $\alpha\in (0,1)$, $\beta\in(0,1]$, $s$ and $t \in J$ satisfying $s \le t$, and $C_\omega$ a constant satisfying
\begin{equation}\label{eqn:C_omega}
	\left\|p\left(\omega(t)^{p-1}-\omega(s)^{p-1}\right)\varphi\right\|_{L^2}\le C_\omega(t-s)\|\varphi\|_{L^2},~\forall\varphi\in L^2(\Omega),
\end{equation}
it follows that for any $\varphi\in L^2(\Omega)$,
\begin{eqnarray*}
	\left\|\Delta_\mu^\alpha U(t,s)\varphi\right\|_{L^2}
	&\le&\left(\frac{\alpha}{e\beta}\right)^\alpha(t-s)^{-\alpha} e^{-(t-s)(1-\beta)\lambda_A}\left\{1+\frac{C_\omega(t-s)^2}{(1-\alpha)(2-\alpha)}\right\}\|\varphi\|_{L^2}.
\end{eqnarray*}
\end{lemma}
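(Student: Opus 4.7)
The plan is to start from the operator-valued integral equation \eqref{eqn:evop_form} for $U(t,s)$, apply $\Delta_\mu^\alpha$, and bound each term in $L^2$ separately by invoking the three preceding lemmas. Specifically, I would write
\[
\Delta_\mu^\alpha U(t,s)\varphi = \Delta_\mu^\alpha e^{-(t-s)A(s)}\varphi + \int_s^t \Delta_\mu^\alpha e^{-(t-r)A(s)} p\bigl(\omega(r)^{p-1}-\omega(s)^{p-1}\bigr) U(r,s)\varphi \, dr,
\]
and then use Lemma \ref{lem:fractional_pow_norm} to replace each $\|\Delta_\mu^\alpha \cdot\|_{L^2}$ by $\|A(s)^\alpha \cdot\|_{L^2}$, after which Lemma \ref{lem:semigroup_es2} provides the smoothing bound for the analytic semigroup.

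For the first (homogeneous) term this yields directly $\left(\frac{\alpha}{e\beta}\right)^\alpha (t-s)^{-\alpha} e^{-(t-s)(1-\beta)\lambda_A}\|\varphi\|_{L^2}$. For the integral term I would apply, in order, Lemma \ref{lem:semigroup_es2} to control $\|A(s)^\alpha e^{-(t-r)A(s)}\|$, the hypothesis \eqref{eqn:C_omega} to bound $\|p(\omega(r)^{p-1}-\omega(s)^{p-1})\cdot\|_{L^2}$ by $C_\omega(r-s)\|\cdot\|_{L^2}$, and Lemma \ref{lem:U_es} (with $\eta = \lambda_A$, which is justified by the remark following that lemma) to dominate $\|U(r,s)\varphi\|_{L^2}$ by $e^{-(r-s)\lambda_A}\|\varphi\|_{L^2}$.

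The key point, which is the one delicate step, is to consolidate the three exponential factors into the single factor $e^{-(t-s)(1-\beta)\lambda_A}$ that appears in the stated bound. Since $\beta \in (0,1]$ gives $(1-\beta)\lambda_A \le \lambda_A$, I have $e^{-(r-s)\lambda_A} \le e^{-(r-s)(1-\beta)\lambda_A}$, so that
\[
e^{-(t-r)(1-\beta)\lambda_A}\, e^{-(r-s)\lambda_A} \le e^{-(t-s)(1-\beta)\lambda_A},
\]
and this factor can be pulled outside the integral. What remains is
\[
\left(\frac{\alpha}{e\beta}\right)^\alpha C_\omega\, e^{-(t-s)(1-\beta)\lambda_A}\|\varphi\|_{L^2} \int_s^t (t-r)^{-\alpha}(r-s)\, dr.
\]

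Finally, the integral $\int_s^t (t-r)^{-\alpha}(r-s)\, dr$ is a Beta-type integral; substituting $r = s + u(t-s)$ converts it to $(t-s)^{2-\alpha} B(2, 1-\alpha) = (t-s)^{2-\alpha}/[(1-\alpha)(2-\alpha)]$. Adding the homogeneous contribution to this and factoring out $\left(\frac{\alpha}{e\beta}\right)^\alpha (t-s)^{-\alpha} e^{-(t-s)(1-\beta)\lambda_A}$ yields exactly the asserted bound. I expect the main (though still routine) subtlety to be the bookkeeping in the exponentials, since naively bounding $e^{-(t-r)(1-\beta)\lambda_A}\le 1$ would lose the prefactor $e^{-(t-s)(1-\beta)\lambda_A}$; borrowing decay from $\|U(r,s)\varphi\|_{L^2}$ is essential.
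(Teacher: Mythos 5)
Your proposal is correct and follows essentially the same line of argument as the paper: start from the integral equation \eqref{eqn:evop_form}, pass $\Delta_\mu^\alpha$ to $A(s)^\alpha$ via Lemma~\ref{lem:fractional_pow_norm}, invoke Lemmas~\ref{lem:semigroup_es2}, \ref{lem:U_es} and \eqref{eqn:C_omega}, factor out $e^{-(t-s)(1-\beta)\lambda_A}$, and evaluate the remaining Beta-type integral. The only cosmetic difference is that the paper carries the leftover factor $e^{-(r-s)\beta\lambda_A}$ inside the integral and discards it when bounding $\int_s^t(t-r)^{-\alpha}(r-s)e^{-(r-s)\beta\lambda_A}\,dr$ by $(t-s)^{2-\alpha}\Gamma(1-\alpha)/\Gamma(3-\alpha)$, whereas you discard that extra decay one step earlier; the two manipulations are equivalent.
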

\begin{proof}
For $\alpha\in (0,1)$, $\beta\in(0,1]$, and $s$ and $t \in J$, Lemmas \ref{lem:U_es}, \ref{lem:fractional_pow_norm}, \ref{lem:semigroup_es2}, and~\eqref{eqn:C_omega} yield for any $\varphi\in L^2(\Omega)$,
\begin{eqnarray*}
	&&\hspace{-30pt}\left\|\Delta_\mu^\alpha U(t,s)\varphi\right\|_{L^2}\\
	&\le&\left\|\Delta_\mu^\alpha\,e^{-(t-s)A(s)}\varphi\right\|_{L^2}+\int_{s}^t\left\|\Delta_\mu^\alpha\,e^{-(t-r)A(s)}\{-(A(r)-A(s))\}U(r,s)\varphi\right\|_{L^2}dr\\
	&\le&\left\|A(s)^\alpha\,e^{-(t-s)A(s)}\varphi\right\|_{L^2}+\int_{s}^t\left\|A(s)^\alpha\,e^{-(t-r)A(s)}\{-(A(r)-A(s))\}U(r,s)\varphi\right\|_{L^2} \hspace{-7pt} dr  \\
	&\le&\left(\frac{\alpha}{e\beta}\right)^\alpha(t-s)^{-\alpha}e^{-(t-s)(1-\beta)\lambda_A}\|\varphi\|_{L^2}\\
	&&\quad +\int_s^t\left(\frac{\alpha}{e\beta}\right)^\alpha(t-r)^{-\alpha} e^{-(t-r)(1-\beta)\lambda_A}\|(A(r)-A(s))U(r,s)\varphi\|_{L^2} \, dr  \\
	&\le&\left(\frac{\alpha}{e\beta}\right)^\alpha(t-s)^{-\alpha}e^{-(t-s)(1-\beta)\lambda_A}\|\varphi\|_{L^2}\\
	&&\quad +\int_s^t\left(\frac{\alpha}{e\beta}\right)^\alpha(t-r)^{-\alpha} e^{-(t-r)(1-\beta)\lambda_A}C_{\omega}(r-s)\|U(r,s)\varphi\|_{L^2} \, dr\\
	&\le&\left(\frac{\alpha}{e\beta}\right)^\alpha(t-s)^{-\alpha}e^{-(t-s)(1-\beta)\lambda_A}\|\varphi\|_{L^2}\\
	&&\quad +\int_s^t\left(\frac{\alpha}{e\beta}\right)^\alpha(t-r)^{-\alpha} e^{-(t-r)(1-\beta)\lambda_A}C_{\omega}(r-s)e^{-(r-s)\lambda_A}\|\varphi\|_{L^2} \, dr\\
	&=&\left(\frac{\alpha}{e\beta}\right)^\alpha(t-s)^{-\alpha}e^{-(t-s)(1-\beta)\lambda_A}\|\varphi\|_{L^2}\\
	&&\quad +\left(\frac{\alpha}{e\beta}\right)^\alpha C_{\omega}\|\varphi\|_{L^2}e^{-(t-s)(1-\beta)\lambda_A}\int_s^t(t-r)^{-\alpha}(r-s)e^{-(r-s)\beta\lambda_A} \, dr\\
	&\le&\left(\frac{\alpha}{e\beta}\right)^\alpha(t-s)^{-\alpha}e^{-(t-s)(1-\beta)\lambda_A}\|\varphi\|_{L^2}\\
	&&\quad +\left(\frac{\alpha}{e\beta}\right)^\alpha C_{\omega}\|\varphi\|_{L^2}e^{-(t-s)(1-\beta)\lambda_A}(t-s)^{2-\alpha}\frac{\Gamma(1-\alpha)}{\Gamma(3-\alpha)}\\
	&=&\left(\frac{\alpha}{e\beta}\right)^\alpha(t-s)^{-\alpha}e^{-(t-s)(1-\beta)\lambda_A}\left\{1+\frac{C_{\omega}(t-s)^{2}}{(1-\alpha)(2-\alpha)}\right\}\|\varphi\|_{L^2}.
\end{eqnarray*}
\end{proof}

\section{Local Inclusion}\label{sec:localinc}

In this section, we fix the fractional power $\alpha$ in the interval $\left(\frac{d(p-1)}{4p},\frac{1}{p}\right)$,
and we define a weighted subspace of $C(J;D(\Delta_\mu^\alpha))$ as
\[
	X_{\alpha}:=\left\{u\in C(J;D(\Delta_\mu^\alpha)):\sup_{t\in J}(t-t_0)^\alpha\|\Delta_\mu^\alpha u\|_{L^2}<+\infty\right\}
\]
with the norm  $\|u\|_{X_\alpha}:=\sup_{t\in J}(t-t_0)^\alpha\|\Delta_\mu^\alpha u\|_{L^2}$.
Under this norm, $X_\alpha$ becomes a Banach space\footnote{The norm $\|u\|_{X_\alpha}$ is equivalent to the graph norm of $C(J;D(\Delta_\mu^\alpha))$ because of the embedding $D(\Delta_\mu^\alpha)\hookrightarrow L^2(\Omega)$.}.
In addition, we define a neighborhood of $\omega$ as 
\[
	B_J(\omega,\rho):=\left\{u\in C(J;D(\Delta_\mu^\alpha)):\sup_{t\in J}(t-t_0)^\alpha e^{-\sigma(t-t_0)}\|\Delta_\mu^\alpha(u-\omega)\|_{L^2}\le\rho\right\}.
\]
The following theorem gives a sufficient condition for guaranteeing the existence and local uniqueness of the mild solution of~\eqref{eqn:monJ} in $B_J(\omega,\rho)$.

\begin{theorem}\label{thm:maintheorem}
Let $\alpha$ satisfy $\alpha\in\left(\frac{d(p-1)}{4p},\frac{1}{p}\right)$, $p\ge2$, and $\tau=t_1-t_0$.
We assume that the approximate solution $\omega$ satisfies $\|u_0-\omega(t_0)\|_{L^2}\le\varepsilon_{0}$ and
\begin{equation}\label{eqn:residual}
	\left\|\partial_t\omega-\Delta\omega-\omega^p\right\|_{C\left(J;L^2(\Omega)\right)}\le\delta.
\end{equation}
If 
\begin{equation}\label{eqn:cond}
	W(\tau)\left(\varepsilon_0+L_\omega(\rho)\rho^2+\frac{\delta\tau}{1-\alpha}\right)<\rho
\end{equation}
holds for $\rho>0$, then the mild solution $u$ of~\eqref{eqn:monJ} {exists and is unique} in $B_{J}(\omega,\rho)$.
Here, $W(\tau)$ and $L_\omega(\rho)$ are given by
\[
	W(\tau)=\left(\frac{\alpha}{e}\right)^\alpha\left\{ 1+\frac{C_\omega\tau^2}{(1-\alpha)(2-\alpha)}\right\}
\]
and
\[
	L_\omega(\rho)=p(p-1)C_{2p,\alpha}^2e^{\sigma\tau}\left(\tau^\alpha\left\|\omega\right\|_{C(J;L^{2p}(\Omega))}+C_{2p,\alpha}e^{\sigma\tau}\rho\right)^{p-2}\tau^{1-p\alpha}B(1-\alpha,1-p\alpha),
\]
respectively.
The constant $C_\omega$ satisfies~\eqref{eqn:C_omega},
$C_{2p,\alpha}$ is the embedding constant introduced in Lemma $\ref{lem:frac_emb}$, and $B(x,y)$ is the Beta function.
\end{theorem}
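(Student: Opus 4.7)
The plan is to apply the Banach fixed-point theorem to the operator $\mathcal{S}$ of~\eqref{eqn:Tv} on the closed ball $\bar{B}_\rho:=\{v\in X_\alpha:\|v\|_{X_\alpha}\le\rho,\;v(t_0)=u_0-\omega(t_0)\}$. Since a mild solution $u$ of~\eqref{eqn:monJ} lying in $B_J(\omega,\rho)$ corresponds bijectively through $v=e^{-\sigma(t-t_0)}(u-\omega)$ to a fixed point of $\mathcal{S}$ in $\bar{B}_\rho$, the claim reduces to the two standard Banach hypotheses: $\mathcal{S}(\bar{B}_\rho)\subset\bar{B}_\rho$ and $\mathcal{S}|_{\bar{B}_\rho}$ is a contraction.

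For the self-mapping, I would split $g(v)=g_1(v)+g_2$, where $g_1(v)=e^{-\sigma(t-t_0)}\{(\omega+e^{\sigma(t-t_0)}v)^p-\omega^p-p\omega^{p-1}e^{\sigma(t-t_0)}v\}$ is the superlinear remainder and $g_2=-e^{-\sigma(t-t_0)}(\partial_t\omega-\Delta\omega-\omega^p)$ is the defect, satisfying $\|g_2(s)\|_{L^2}\le\delta$ by~\eqref{eqn:residual}. Since $p\ge 2$, Taylor's theorem rewrites $g_1$ as a $v^2$-times-$(\omega+\theta e^{\sigma(t-t_0)}v)^{p-2}$ integral in $\theta$. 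H\"older's inequality with exponents $p$ and $2p/(p-2)$ then splits the pointwise product, and Lemma~\ref{lem:frac_emb} applied with $2p$ in place of $p$ (admissible because $\alpha>d(p-1)/(4p)=d(2p-2)/(4\cdot 2p)$) converts the $L^{2p}$-norms into $\|\Delta_\mu^\alpha\cdot\|_{L^2}$-norms. Crucially, I would apply the rearrangement $(s-t_0)^\alpha\|\omega(s)\|_{L^{2p}}\le\tau^\alpha\|\omega\|_{C(J;L^{2p}(\Omega))}$ \emph{inside} the $(p-2)$-th power of $\|\omega+\theta e^{\sigma(t-t_0)}v\|_{L^{2p}}\le\|\omega\|_{L^{2p}}+e^{\sigma\tau}\|v\|_{L^{2p}}$ so that all $(s-t_0)$-singularity ends up attached to $v$, producing
\[
\|g_1(v(s))\|_{L^2}\le p(p-1)e^{\sigma\tau}C_{2p,\alpha}^2\bigl(\tau^\alpha\|\omega\|_{C(J;L^{2p}(\Omega))}+e^{\sigma\tau}C_{2p,\alpha}\rho\bigr)^{p-2}(s-t_0)^{-p\alpha}\rho^2.
\]
Combined with Lemma~\ref{lem:U_ts_es} at $\beta=1$, $\|\Delta_\mu^\alpha U(t,s)\varphi\|_{L^2}\le W(\tau)(t-s)^{-\alpha}\|\varphi\|_{L^2}$, and evaluating $\int_{t_0}^t(t-s)^{-\alpha}(s-t_0)^{-p\alpha}\,ds=(t-t_0)^{1-(p+1)\alpha}B(1-\alpha,1-p\alpha)$ (finite precisely because $\alpha<1/p$) together with $\int_{t_0}^t(t-s)^{-\alpha}\,ds\le\tau^{1-\alpha}/(1-\alpha)$ for the residual, multiplying by $(t-t_0)^\alpha$ and taking the sup over $t\in J$ gives
\[
\|\mathcal{S}(v)\|_{X_\alpha}\le W(\tau)\bigl(\varepsilon_0+L_\omega(\rho)\rho^2+\tfrac{\delta\tau}{1-\alpha}\bigr),
\]
which is strictly less than $\rho$ by~\eqref{eqn:cond}. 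For the contraction, a double application of the mean value theorem writes $g(v_1)-g(v_2)$ as a bilinear form in $v_1-v_2$ and $\theta v_1+(1-\theta)v_2$; re-running the same H\"older/embedding/Beta machinery produces a Lipschitz constant of order $W(\tau)L_\omega(\rho)\rho$, and since~\eqref{eqn:cond} already implies $W(\tau)L_\omega(\rho)\rho<1$, Banach's theorem yields the unique fixed point $v^*\in\bar{B}_\rho$.

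The main technical hurdle is producing the bound on $\|g_1(v(s))\|_{L^2}$ with the right singularity structure. A naive estimate would retain the $(s-t_0)^{-\alpha}$ factor from $v$ \emph{inside} the $(p-2)$-th power of $\omega+\theta e^{\sigma(t-t_0)}v$, thereby creating a non-integrable singularity at $s=t_0$ once paired with $(t-s)^{-\alpha}$ under the integral, and inflating the embedding constant. The absorption $(s-t_0)^\alpha\|\omega(s)\|_{L^{2p}}\le\tau^\alpha\|\omega\|_{C(J;L^{2p}(\Omega))}$ pushes all singular $s$-dependence onto the $v^2$-factor, leaving the clean integrand $(s-t_0)^{-p\alpha}$ and delivering the convergent Beta integral $B(1-\alpha,1-p\alpha)$. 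This is precisely the ``rearranged error estimate'' singled out in the introduction as the key device for avoiding over-estimation.
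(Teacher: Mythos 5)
Your proposal is correct and follows essentially the same argument as the paper: a Banach fixed-point argument on the closed $\rho$-ball in $X_\alpha$, the decomposition $g=g_1+g_2$ with Lemma~\ref{lem:Lipschitz} controlling $g_1$ and the residual bound \eqref{eqn:residual} controlling $g_2$, Lemma~\ref{lem:U_ts_es} at $\beta=1$, the rearrangement that absorbs $(s-t_0)^\alpha$ into $\|\omega\|_{C(J;L^{2p})}$ inside the $(p-2)$th power, the Beta-function evaluation, and finally the contraction constant $W(\tau)L_\omega(\rho)\rho<1$ extracted directly from \eqref{eqn:cond}. The only cosmetic difference is that you fix $v(t_0)=u_0-\omega(t_0)$ explicitly in the domain of $\mathcal{S}$, which is a slightly cleaner formulation of what the paper does implicitly when substituting $\varepsilon_0$ for $\|v_0\|_{L^2}$; one small quibble is that the naive estimate does not actually produce a non-integrable singularity (the combined $(t-s)^{-\alpha}(s-t_0)^{-p\alpha}$ is integrable since $\alpha<1/p$) — the point of the rearrangement is rather to obtain a cleanly factorable, computable constant.
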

The proof of Theorem \ref{thm:maintheorem} is based on Banach's fixed-point theorem using the nonlinear operator $\mathcal{S}$ defined in~\eqref{eqn:Tv}.
Before proving the theorem, we prepare a lemma with respect to the nonlinear term of~\eqref{eqn:monJ}.

\begin{lemma}\label{lem:Lipschitz}
Let $z_1$ and $z_2\ in D\left(\Delta_\mu^\alpha\right)$,
and let $\omega$ be the approximate solution.
For a fixed $t\in J$ and $p\ge2$, it follows that
\begin{eqnarray*}
	&&\hspace{-30pt}\left\|\left(\omega+z_1\right)^p-\left(\omega+z_2\right)^p-p\omega^{p-1}(z_1-z_2)\right\|_{L^2}\\
	&\le&p(p-1)C_{2p,\alpha}^2\int_0^1\int_0^1\left(\|\omega\|_{L^{2p}}+\eta C_{2p,\alpha}\left\|\Delta_\mu^\alpha(\theta z_1+(1-\theta)z_2)\right\|_{L^2}\right)^{p-2}d\eta\\
	&&\quad\left\|\Delta_\mu^\alpha(\theta z_1+(1-\theta)z_2)\right\|_{L^2}d\theta\left\|\Delta_\mu^\alpha (z_1-z_2)\right\|_{L^2},
\end{eqnarray*}
where $C_{2p,\alpha}$ is the embedding constant introduced in Lemma $\ref{lem:frac_emb}$.
\end{lemma}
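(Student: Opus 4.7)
The plan is to express the left-hand side as an explicit double integral by applying the fundamental theorem of calculus twice, then control the resulting pointwise three-factor product using H\"older's inequality followed by the embedding in Lemma~\ref{lem:frac_emb}. Writing $F(\theta) := (\omega+\theta z_1+(1-\theta)z_2)^p$ and using $F(1)-F(0)=\int_0^1 F'(\theta)\,d\theta$, one obtains
\[
(\omega+z_1)^p - (\omega+z_2)^p = \int_0^1 p(\omega+\theta z_1+(1-\theta)z_2)^{p-1}(z_1-z_2)\,d\theta.
\]
Subtracting $p\omega^{p-1}(z_1-z_2)=\int_0^1 p\omega^{p-1}(z_1-z_2)\,d\theta$ under the integral and applying the fundamental theorem of calculus a second time to $G(\eta):=(\omega+\eta w_\theta)^{p-1}$ on $[0,1]$, with the shorthand $w_\theta := \theta z_1+(1-\theta)z_2$, produces the pointwise identity
\[
(\omega+z_1)^p - (\omega+z_2)^p - p\omega^{p-1}(z_1-z_2) = p(p-1)\int_0^1\!\!\int_0^1 (\omega+\eta w_\theta)^{p-2}\,w_\theta\,(z_1-z_2)\,d\eta\,d\theta.
\]

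Next I would take the $L^2(\Omega)$ norm, move it inside the double integral via Minkowski's inequality, and apply H\"older's inequality to each pointwise product. The natural choice of exponents is $\tfrac{2p}{p-2}$ for the $(p{-}2)$-th power factor and $2p$ for each of the two remaining linear factors, since $\tfrac{p-2}{2p}+\tfrac{1}{2p}+\tfrac{1}{2p}=\tfrac{1}{2}$; the hypothesis $p\ge 2$ enters precisely here so that $2p/(p-2)$ is a valid exponent (interpreted as $\infty$ when $p=2$). Using $\|(\omega+\eta w_\theta)^{p-2}\|_{L^{2p/(p-2)}} = \|\omega+\eta w_\theta\|_{L^{2p}}^{p-2}$ yields
\[
\bigl\|(\omega+z_1)^p - (\omega+z_2)^p - p\omega^{p-1}(z_1-z_2)\bigr\|_{L^2} \le p(p-1)\int_0^1\!\!\int_0^1 \|\omega+\eta w_\theta\|_{L^{2p}}^{p-2}\,\|w_\theta\|_{L^{2p}}\,\|z_1-z_2\|_{L^{2p}}\,d\eta\,d\theta.
\]

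Finally, I would invoke Lemma~\ref{lem:frac_emb} with the target exponent $2p$ to pass from the $L^{2p}$ norms back to the $L^2$ norm of the fractional power: $\|\phi\|_{L^{2p}}\le C_{2p,\alpha}\|\Delta_\mu^\alpha\phi\|_{L^2}$. Applying this directly to $w_\theta$ and to $z_1-z_2$, and to the first factor via the triangle inequality $\|\omega+\eta w_\theta\|_{L^{2p}} \le \|\omega\|_{L^{2p}} + \eta C_{2p,\alpha}\|\Delta_\mu^\alpha w_\theta\|_{L^2}$, and then pulling the $\eta$-independent factor $\|\Delta_\mu^\alpha(z_1-z_2)\|_{L^2}$ outside the double integral reproduces exactly the stated bound. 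There is no deep obstacle here; the work is bookkeeping, and the only admissibility to check is that Lemma~\ref{lem:frac_emb} applies at exponent $2p$, which requires $\alpha>\tfrac{d(2p-2)}{4\cdot 2p}=\tfrac{d(p-1)}{4p}$, matching precisely the lower endpoint of the range of $\alpha$ in Theorem~\ref{thm:maintheorem}.
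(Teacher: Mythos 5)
Your proof is correct and follows essentially the same route as the paper: both derive the identity $p(p-1)\int_0^1\int_0^1(\omega+\eta w_\theta)^{p-2}w_\theta(z_1-z_2)\,d\eta\,d\theta$ (the paper phrases the two applications of the fundamental theorem of calculus as the mean-value theorem), then pass to $L^2$ via Minkowski, split by H\"older with exponents $\tfrac{2p}{p-2},2p,2p$, use the triangle inequality on $\|\omega+\eta w_\theta\|_{L^{2p}}$, and finish with Lemma~\ref{lem:frac_emb} at exponent $2p$. Your explicit statement of the H\"older exponents and of the admissibility check $\alpha>\tfrac{d(p-1)}{4p}$ is a helpful elaboration of steps the paper leaves implicit.
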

\begin{proof}
The mean-value theorem {implies} that
\begin{eqnarray*}
	&&\hspace{-25pt}\left(\omega+z_1\right)^p-\left(\omega+z_2\right)^p-p\omega^{p-1}(z_1-z_2)\\
	&&=\int_0^1\frac{d}{d\theta}\left\{\left(\omega+\theta z_1+(1-\theta)z_2\right)^p-\theta p\omega^{p-1}(z_1-z_2)\right\} \, d\theta\\
	&&=p\int_0^1\left(\left(\omega+\theta z_1+(1-\theta)z_2\right)^{p-1}-\omega^{p-1}\right) \, d\theta~(z_1-z_2)\\
	&&=p\int_0^1\int_0^1\frac{d}{d\eta}\left(\omega+\eta(\theta z_1+(1-\theta)z_2)\right)^{p-1} \, d\eta \, d\theta~(z_1-z_2)\\
	&&=p(p-1)\int_0^1\int_0^1\left(\omega+\eta(\theta z_1+(1-\theta)z_2)\right)^{p-2}d\eta(\theta z_1+(1-\theta)z_2) d\theta(z_1-z_2).
\end{eqnarray*}
From H\"older's inequality~\cite{bib:Adams1975}, Minkowski's inequality~\cite{bib:Adams1975}, and Lemma~\ref{lem:frac_emb}, we have
\begin{eqnarray*}
	&&\hspace{-25pt}\left\|\left(\omega+z_1\right)^p-\left(\omega+z_2\right)^p-p\omega^{p-1}(z_1-z_2)\right\|_{L^2}\\
	&& \le p(p-1)\int_0^1\int_0^1\left\|\omega+\eta(\theta z_1+(1-\theta)z_2)\right\|_{L^{2p}}^{p-2}d\eta  \\
	    &&\qquad \left\| \theta z_1 + (1-\theta)z_2\right\|_{L^{2p}}d\theta\left\|z_1-z_2\right\|_{L^{2p}}\\
	&& \le p(p-1)\int_0^1\int_0^1\left(\|\omega\|_{L^{2p}}+\eta\|\theta z_1+(1-\theta)z_2\|_{L^{2p}}\right)^{p-2}d\eta  \\
	    &&\qquad \left\|\theta z_1+(1-\theta)z_2\right\|_{L^{2p}}d\theta\left\|z_1-z_2\right\|_{L^{2p}}\\
	&& \le p(p-1)C_{2p,\alpha}^2\int_0^1\int_0^1\left(\|\omega\|_{L^{2p}}+\eta C_{2p,\alpha}\left\|\Delta_\mu^\alpha(\theta z_1+(1-\theta)z_2)\right\|_{L^2}\right)^{p-2}d\eta\\
	&&\qquad\left\|\Delta_\mu^\alpha(\theta z_1+(1-\theta)z_2)\right\|_{L^2}d\theta\left\|\Delta_\mu^\alpha (z_1-z_2)\right\|_{L^2}.
\end{eqnarray*}
\end{proof}

\medskip\noindent
\begin{proof}\,(Theorem~\ref{thm:maintheorem})
For $\rho > 0$, let $V:=\left\{v\in X_\alpha:\|v\|_{X_\alpha}\le\rho\right\}$.
Consider the nonlinear operator $\mathcal{S}$ defined in~\eqref{eqn:Tv}.
From Banach's fixed-point theorem, we derive a sufficient condition for $\mathcal{S}$ to have a fixed point in $V$.
For $v\in V$ with a fixed $t\in J$,
\begin{equation*}\label{eqn:Fform_estimate}
	\left\|\Delta_\mu^\alpha \mathcal{S}(v(t))\right\|_{L^2}\le\left\|\Delta_\mu^\alpha  U(t,t_0)v(t_0)\right\|_{L^2}+\int_{t_0}^t\left\|\Delta_\mu^\alpha U(t,s)g(v(s))\right\|_{L^2} \, ds.
\end{equation*}
Setting $v_0=v(t_0)$, we have the following estimate from Lemma \ref{lem:U_ts_es} with $\beta=1$:
\begin{align}\label{eqn:T_v_es}\nonumber
	&\hspace{-40pt}\left\|\Delta_\mu^\alpha \mathcal{S}(v(t))\right\|_{L^2}\\
	\le&\left\|\Delta_\mu^\alpha  U(t,t_0)v_0\right\|_{L^2}+\int_{t_0}^t\left\|\Delta_\mu^\alpha U(t,s)g(v(s))\right\|_{L^2} \, ds\nonumber\\
	\le&(t-t_0)^{-\alpha}\left(\frac{\alpha}{e}\right)^\alpha\|v_0\|_{L^2}\left\{1+\frac{C_\omega(t-t_0)^2}{(1-\alpha)(2-\alpha)}\right\}\nonumber\\
	&\qquad +\int_{t_0}^t(t-s)^{-\alpha}\left(\frac{\alpha}{e}\right)^\alpha\|g(v(s))\|_{L^2}\left\{1+\frac{C_\omega(t-s)^2}{(1-\alpha)(2-\alpha)}\right\} \, ds\nonumber\\
	\le&(t-t_0)^{-\alpha}\left(\frac{\alpha}{e}\right)^\alpha\left\{1+\frac{C_\omega(t-t_0)^2}{(1-\alpha)(2-\alpha)}\right\} \nonumber \\
	&\qquad \left(\|v_0\|_{L^2}+(t-t_0)^{\alpha}\int_{t_0}^t(t-s)^{-\alpha}\|g(v(s))\|_{L^2} \, ds\right). 
\end{align}
To estimate $g(v(s))$ in the last term in~\eqref{eqn:T_v_es}, we decompose $g$ into two parts, as $g(v(s))=g_1(s)+g_2(s)$, where
\[
	g_1(s):=e^{-\sigma(s-t_0)}\left\{\left(\omega+e^{\sigma(s-t_0)}v\right)^p-\omega^p-p\omega^{p-1}e^{\sigma(s-t_0)}v\right\},
\]
and
\[
	g_2(s):=e^{-\sigma(s-t_0)}\left(\omega^p-\partial_{t} \omega+\Delta \omega\right).
\]
Setting $z_1=e^{\sigma(s-t_0)}v$ and $z_2=0$ in Lemma \ref{lem:Lipschitz},
the estimate of $g_1(s)$ is
\begin{align*}
	\left\|g_1(s)\right\|_{L^2}
	\le & ~p(p-1)C_{2p,\alpha}^2e^{\sigma(s-t_0)}\left\|\Delta_\mu^\alpha v\right\|_{L^2}^2 \\
	& \qquad \int_0^1\int_0^1\left(\left\|\omega\right\|_{L^{2p}}+\theta\eta C_{2p,\alpha}e^{\sigma(s-t_0)}\left\|\Delta_\mu^\alpha v\right\|_{L^2}\right)^{p-2} \, d\eta\theta \, d\theta.
\end{align*}
It follows that
\begin{align}\label{eqn:g1_es}\nonumber
	&\hspace{-30pt}(t-t_0)^\alpha\int_{t_0}^t(t-s)^{-\alpha}\left\|g_1(s)\right\|_{L^2} \, ds\\\nonumber
	\le&~(t-t_0)^\alpha\int_{t_0}^t(t-s)^{-\alpha}\Bigg\{p(p-1)C_{2p,\alpha}^2e^{\sigma(s-t_0)}\left\|\Delta_\mu^\alpha v\right\|_{L^2}^2\\\nonumber
 	    &\quad\int_0^1\int_0^1\left(\left\|\omega\right\|_{L^{2p}}+\theta\eta C_{2p,\alpha}e^{\sigma(s-t_0)}\left\|\Delta_\mu^\alpha v\right\|_{L^2}\right)^{p-2} \, d\eta\theta \, d\theta\Bigg\} \, ds\\\nonumber
	\le&~(t-t_0)^\alpha p(p-1)C_{2p,\alpha}^2e^{\sigma(t-t_0)}\rho^2 \\\nonumber
	    & \quad \int_0^1\int_0^1\left((t-t_0)^\alpha\left\|\omega\right\|_{C(J;L^{2p}(\Omega))}+\theta\eta C_{2p,\alpha}e^{\sigma(t-t_0)}\rho\right)^{p-2} \, d\eta\theta \, d\theta\\\nonumber
	    &\quad \int_{t_0}^t(t-s)^{-\alpha}(s-t_0)^{-p\alpha} \, ds\\\nonumber
	=&~p(p-1)C_{2p,\alpha}^2e^{\sigma(t-t_0)}\rho^2 \\\nonumber
	&\quad \int_0^1\int_0^1\left((t-t_0)^\alpha\left\|\omega\right\|_{C(J;L^{2p}(\Omega))}+\theta\eta C_{2p,\alpha}e^{\sigma(t-t_0)}\rho\right)^{p-2} \, d\eta\theta \, d\theta\\
	&\quad(t-t_0)^{1-p\alpha}B(1-\alpha,1-p\alpha).
\end{align}
From~\eqref{eqn:residual}, the estimate with respect to $g_2(s)$ is
\begin{align}\label{eqn:g2_es}
	& \hspace{-40pt}(t-t_0)^\alpha\int_{t_0}^t(t-s)^{-\alpha}\left\|g_2(s)\right\|_{L^2} \, ds \nonumber \\
	&\le\delta(t-t_0)^\alpha\int_{t_0}^t(t-s)^{-\alpha}e^{-\sigma(t-t_0)} \, ds
	\le\frac{\delta(t-t_0)}{1-\alpha}.
\end{align}
Hence, \eqref{eqn:T_v_es}, \eqref{eqn:g1_es}, and \eqref{eqn:g2_es} yield
\begin{align}\nonumber
	&\hspace{-10pt}\left\|\mathcal{S}(v)\right\|_{X_\alpha}\\
	\le&~\sup_{t\in J}\left(\frac{\alpha}{e}\right)^\alpha\left\{1+\frac{C_\omega(t-t_0)^2}{(1-\alpha)(2-\alpha)}\right\} 
	\left(\|v_0\|_{L^2}+(t-t_0)^{\alpha}\int_{t_0}^t(t-s)^{-\alpha}\|g(v(s))\|_{L^2} \, ds\right)\nonumber\\
	\le&~\sup_{t\in J}\left(\frac{\alpha}{e}\right)^\alpha\left\{1+\frac{C_\omega(t-t_0)^2}{(1-\alpha)(2-\alpha)}\right\}\nonumber\\
	    &\quad\left(\varepsilon_0+(t-t_0)^{\alpha}\int_{t_0}^t(t-s)^{-\alpha}\|g_1(s)\|_{L^2} \, ds+(t-t_0)^{\alpha}\int_{t_0}^t(t-s)^{-\alpha}\|g_2(s)\|_{L^2} \, ds\right)\nonumber\\
	\le&~\sup_{t\in J}\left(\frac{\alpha}{e}\right)^\alpha\left\{1+\frac{C_\omega(t-t_0)^2}{(1-\alpha)(2-\alpha)}\right\}\Bigg(\varepsilon_0+p(p-1)C_{2p,\alpha}^2e^{\sigma(t-t_0)}\rho^2\nonumber\\
	    &\quad \int_0^1\int_0^1\left((t-t_0)^\alpha\left\|\omega\right\|_{C(J;L^{2p}(\Omega))}+\theta\eta C_{2p,\alpha}e^{\sigma(t-t_0)}\rho\right)^{p-2} \, d\eta\theta \, d\theta \nonumber \\
	    & \qquad(t-t_0)^{1-p\alpha}B(1-\alpha,1-p\alpha) +\frac{\delta(t-t_0)}{1-\alpha}\Bigg)\nonumber
\end{align}
\begin{align}\label{eqn:S(v)inV}
	\le&~\left(\frac{\alpha}{e}\right)^\alpha\left\{1+\frac{C_\omega\tau^2}{(1-\alpha)(2-\alpha)}\right\}\Bigg(\varepsilon_0+p(p-1)C_{2p,\alpha}^2e^{\sigma\tau}\rho^2\nonumber\\
	    &\quad \left(\tau^\alpha\left\|\omega\right\|_{C(J;L^{2p}(\Omega))}+C_{2p,\alpha}e^{\sigma\tau}\rho\right)^{p-2}\tau^{1-p\alpha}B(1-\alpha,1-p\alpha)+\frac{\delta\tau}{1-\alpha}\Bigg)\nonumber\\
	=&~W(\tau)\left(\varepsilon_0+L_\omega(\rho)\rho^2+\frac{\delta\tau}{1-\alpha}\right).
\end{align}
From~\eqref{eqn:cond} and~\eqref{eqn:S(v)inV}, $\left\|\mathcal{S}(v)\right\|_{X_\alpha}<\rho$ holds, implying $\mathcal{S}(v)\in V$.

Next, we show that $\mathcal{S}$ is a contraction mapping on $V$.
Let $v_i\in V$ ($i=1,2$).
From Lemma \ref{lem:U_ts_es},
\begin{align}\label{eqn:T_contraction}\nonumber
	&\hspace{-30pt}\left\|\Delta_\mu^\alpha \left(\mathcal{S}(v_1(t))-\mathcal{S}(v_2(t))\right)\right\|_{L^2}\\
	\le&~\int_{t_0}^t\left\|\Delta_\mu^\alpha U(t,s)\left(g(v_1(s))-g(v_2(s))\right)\right\|_{L^2} \, ds\nonumber\\
	\le&~\left(\frac{\alpha}{e}\right)^\alpha\left\{1+\frac{C_\omega(t-t_0)^2}{(1-\alpha)(2-\alpha)}\right\}\int_{t_0}^t(t-s)^{-\alpha}\|g(v_1(s))-g(v_2(s))\|_{L^2} \, ds.
\end{align}
From
\begin{align*}
	&g(v_1(s))-g(v_2(s))\\
	&=e^{-\sigma(s-t_0)}\left\{\left(\omega+e^{\sigma(s-t_0)}v_1\right)^p-\left(\omega+e^{\sigma(s-t_0)}v_2\right)^p-p\omega^{p-1}e^{\sigma(s-t_0)}(v_1-v_2)\right\},
\end{align*}
setting $z_i=e^{\sigma(s-t_0)}v_i$ ($i=1,2$) in Lemma \ref{lem:Lipschitz}, we have 
\begin{align*}
	&\hspace{-30pt}\left\|g(v_1(s))-g(v_2(s))\right\|_{L^2}\\
	\le&~p(p-1)C_{2p,\alpha}^2e^{\sigma(s-t_0)} \\
	& \quad\int_0^1\int_0^1\left(\|\omega\|_{L^{2p}}+\eta C_{2p,\alpha}e^{\sigma(s-t_0)}\left\|\Delta_\mu^\alpha(\theta v_1+(1-\theta)v_2)\right\|_{L^2}\right)^{p-2} \, d\eta\\
	&\quad\left\|\Delta_\mu^\alpha(\theta v_1+(1-\theta)v_2)\right\|_{L^2}d\theta\left\|\Delta_\mu^\alpha (v_1-v_2)\right\|_{L^2}.
\end{align*}
Since the ball $V$ is convex, and $\|v_i\|_{X_\alpha}\le\rho$ ($i=1,2$), $\|\theta v_1+(1-\theta)v_2\|_{X_\alpha}\le\rho$ holds for $\theta\in[0,1]$.
We then obtain
\begin{align}\label{eqn:g1_g2_es}\nonumber
	&\hspace{-20pt}\int_{t_0}^t(t-s)^{-\alpha}\left\|g(v_1(s))-g(v_2(s))\right\|_{L^2} \, ds\\\nonumber
	\le&~\int_{t_0}^t(t-s)^{-\alpha}\Bigg\{p(p-1)C_{2p,\alpha}^2e^{\sigma(s-t_0)}\\\nonumber
	&\quad\int_0^1\int_0^1\left(\|\omega\|_{L^{2p}}+\eta C_{2p,\alpha}e^{\sigma(s-t_0)}\left\|\Delta_\mu^\alpha(\theta v_1+(1-\theta)v_2)\right\|_{L^2}\right)^{p-2}d\eta\\\nonumber
	&\quad\left\|\Delta_\mu^\alpha(\theta v_1+(1-\theta)v_2)\right\|_{L^2}d\theta\left\|\Delta_\mu^\alpha (v_1-v_2)\right\|_{L^2}\Bigg\} \, ds\\\nonumber
	\le&~p(p-1)C_{2p,\alpha}^2e^{\sigma(t-t_0)}\rho\int_0^1\left((t-t_0)^\alpha\left\|\omega\right\|_{C(J;L^{2p}(\Omega))}+\eta C_{2p,\alpha}e^{\sigma(t-t_0)}\rho\right)^{p-2} \, d\eta\\
	&\quad(t-t_0)^{1-(p+1)\alpha}B(1-\alpha,1-p\alpha)\left\|v_1-v_2\right\|_{X_\alpha}.
\end{align}
Hence, from~\eqref{eqn:T_contraction} and~\eqref{eqn:g1_g2_es}, we have
\begin{align*}\label{eqn:S(v1)_S(v2)}\nonumber
	&\hspace{-20pt}\left\|\mathcal{S}(v_1)-\mathcal{S}(v_2)\right\|_{X_\alpha}\\
	\le&~\sup_{t\in J}(t-t_0)^\alpha\left(\frac{\alpha}{e}\right)^\alpha\left\{1+\frac{C_\omega(t-t_0)^2}{(1-\alpha)(2-\alpha)}\right\} \nonumber \\
	    & \quad	\int_{t_0}^t(t-s)^{-\alpha}\|g(v_1(s))-g(v_2(s))\|_{L^2} \, ds\nonumber\\
	\le&~\sup_{t\in J}(t-t_0)^\alpha\left(\frac{\alpha}{e}\right)^\alpha\left\{1+\frac{C_\omega(t-t_0)^2}{(1-\alpha)(2-\alpha)}\right\}\nonumber\\
    	&\quad p(p-1)C_{2p,\alpha}^2e^{\sigma(t-t_0)}\rho\int_0^1\left((t-t_0)^\alpha\left\|\omega\right\|_{C(J;L^{2p}(\Omega))}+\eta C_{2p,\alpha}e^{\sigma(t-t_0)}\rho\right)^{p-2} \, d\eta\nonumber\\
    	&\quad (t-t_0)^{1-(p+1)\alpha}B(1-\alpha,1-p\alpha)\left\|v_1-v_2\right\|_{X_\alpha}\nonumber\\
	\le&~\sup_{t\in J}\left(\frac{\alpha}{e}\right)^\alpha\left\{1+\frac{C_\omega(t-t_0)^2}{(1-\alpha)(2-\alpha)}\right\}\nonumber\\
    	&\quad p(p-1)C_{2p,\alpha}^2e^{\sigma(t-t_0)}\rho\int_0^1\left((t-t_0)^\alpha\left\|\omega\right\|_{C(J;L^{2p}(\Omega))}+\eta C_{2p,\alpha}e^{\sigma(t-t_0)}\rho\right)^{p-2}d\eta\nonumber\\
    	&\quad (t-t_0)^{1-p\alpha}B(1-\alpha,1-p\alpha)\left\|v_1-v_2\right\|_{X_\alpha}\nonumber\\
	\le&\left(\frac{\alpha}{e}\right)^\alpha\left\{1+\frac{C_\omega\tau^2}{(1-\alpha)(2-\alpha)}\right\}p(p-1)C_{2p,\alpha}^2e^{\sigma\tau}\rho\nonumber\\
    	&\quad \left(\tau^\alpha\left\|\omega\right\|_{C(J;L^{2p}(\Omega))}+C_{2p,\alpha}e^{\sigma\tau}\rho\right)^{p-2}\tau^{1-p\alpha}B(1-\alpha,1-p\alpha)\left\|v_1-v_2\right\|_{X_\alpha}\nonumber\\
	=&~W(\tau)L_\omega(\rho)\rho\left\|v_1-v_2\right\|_{X_\alpha}.
\end{align*}
From the condition~\eqref{eqn:cond} stated in the theorem, $W(\tau)L_\omega(\rho)\rho<1$ holds.
Therefore, $\mathcal{S}$ becomes a contraction mapping under the assumptions of the theorem.
Banach's fixed-point theorem asserts that there exists the unique fixed point of $\mathcal{S}$ in $V$.
\end{proof}

\begin{remark}
The main feature of Theorem~\ref{thm:maintheorem} is that the sufficient condition~\eqref{eqn:cond} can be checked rigorously by verified numerical computations based on the interval arithmetic.
In addition, if the condition holds, the existence and local uniqueness of the mild solution are also proved in the sense of $C(J;L^2(\Omega))$ because there exists an embedding $D(\Delta_\mu^\alpha)\hookrightarrow L^2(\Omega)$.
For the detailed estimate of the residual in~\eqref{eqn:residual}, see Appendix A in \cite{bib:Mizuguchi2017}.
Moreover, when $1<p<2$, the conclusion of Lemma~\ref{lem:Lipschitz} does not hold.
Then a more careful estimate than Lemma~\ref{lem:Lipschitz} is necessary, as outlined in~\cite{bib:Tanaka2016}.
\end{remark}

\section{Concatenation Scheme}\label{sec:concatescheme}

After getting the local inclusion based on Theorem~\ref{thm:maintheorem}, we try to extend the time interval in which the mild solution is enclosed.
For this purpose, the initial function is replaced by a ball enclosing the mild solution at the endpoint,
we apply Theorem~\ref{thm:maintheorem} for the initial-boundary value problem on the next time interval, and repeat.
We called such a process a {\em concatenation scheme of verified numerical inclusion} in~\cite{bib:Mizuguchi2014}.

\subsection{Pointwise Error Estimate}\label{sec:shrinkingtech}
For a natural number $n$, let $0=t_0<t_1<\cdots<t_n<\infty$.
We denote $J_i=(t_{i-1},t_i]$ and $\tau_i=t_i-t_{i-1}$ ($i=1,2,\dots,n$).
We assume that the local inclusion of the mild solution is proved until the time interval $J_n$, i.e.,
the mild solution of \eqref{eqn:monJ} is locally enclosed in each $J_i$ so that
\[
	u\in B_{J_i}(\omega,\rho_i)=\left\{u:\sup_{t\in J_i}(t-t_{i-1})^\alpha e^{-\sigma_i(t-t_{i-1})}\|\Delta_\mu^\alpha(u-\omega)\|_{L^2}\le\rho_i\right\}
\]
for some $\sigma_i$, $\tau_i$, and $\rho_i$.
In this subsection, we give the error estimate at the endpoint of the time interval $J_n=(t_{n-1},t_n]$, namely,
we will describe how to obtain $\varepsilon_n$ satisfying $\|u(t_n)-\omega(t_n)\|_{L^2}\le\varepsilon_n$.
We call such an error estimate the {\em pointwise error estimate}.

For the scheme to succeed for a long time interval, the
pointwise error estimate should avoid the propagation of the previous estimate.
Such a propagation can cause dramatic over-estimation of the error.
This is called the {\em wrapping effect} in the field of verification methods for ordinary differential equations, and there are several techniques for avoiding it~\cite{bib:Berz1998,bib:kashikashi,bib:lohner,bib:Nedialkov1999,bib:Zgliczynski2002}, etc.
Similarly, some shrinking technique for the pointwise error estimate is necessary.
In the following, we will provide a technique for shrinking the propagation using another fixed-point formulation based on the evolution operator.

Define $B(t):=-\Delta-p\omega(t)^{p-1}$.
Since $A(t)$ generates the evolution operator and satisfies $A(t)=B(t)+\sigma_i$,
the real perturbed operator $B(t)$ also generates \cite{bib:Tanabe1960} the evolution operator $\left\{U_B(t,s)\right\}_{t_{i-1}\le s\le t\le t_i}$.
Letting $z=u-\omega$, the function $z$ for $t\in J_i$ satisfies
\[
	z(t)=U_B(t,t_{i-1})z(t_{i-1})+\int_{t_{i-1}}^tU_B(t,s)h_i(z(s)) \, ds,
\]
where
\[
	h_i\left(z\right)=\left(\omega+z\right)^p-\omega^p-p\omega^{p-1}z+\omega^p-\partial_{s} \omega+\Delta \omega.
\]
Furthermore, 
since $\lambda_A$ is the lower bound of the minimal eigenvalue of $A(t)$ for $t\in J_i$,
$\lambda_A-\sigma_i$ denotes the lower bound of the minimal eigenvalue of $B(t)$.
From Lemma \ref{lem:U_es}, it holds for $t$ and $s \in J_i$ that
\begin{equation}\label{eqn:UB_es}
	\|U_B(t,s)\varphi\|_{L^2}\le e^{-(\lambda_A-\sigma_i)(t-s)}\|\varphi\|_{L^2},~\forall\varphi\in L^2(\Omega).
\end{equation}

To obtain the pointwise estimate $\|u(t_n)-\omega(t_n)\|_{L^2}\le\varepsilon_n$,
we have the following estimates using \eqref{eqn:UB_es}:
\begin{align*}
	\|z(t_0)\|_{L^2}=\|u_0-\omega(t_0)\|_{L^2}\le\varepsilon_0,
\end{align*}
\begin{align*}
	\|z(t_1)\|_{L^2}\le&\left\|U_B(t_1,t_0)z(t_0)\right\|_{L^2}+\int_{J_{1}}\left\|U_B(t_1,s)h_1\left(z(s)\right)\right\|_{L^2} \, ds\\
	\le&e^{-(\lambda_A-\sigma_1)(t_1-t_0)}\left\|z(t_0)\right\|_{L^2}+\int_{J_{1}}\left\|U_B(t_1,s)h_1\left(z(s)\right)\right\|_{L^2} \, ds\\
	\le&e^{-(\lambda_A-\sigma_1)\tau_1}\varepsilon_0+\nu_1=:\varepsilon_1,
\end{align*}
where $\nu_i~(i=1,2,\dots,n)$ is the estimate satisfying
\begin{equation}\label{eqn:nu_i}
	\int_{J_{i}}\left\|U_B(t_i,s)h_i\left(z(s)\right)\right\|_{L^2} \, ds\le\nu_i.
\end{equation}
We repeat the pointwise estimates:
\begin{align}\label{eqn:z2}\nonumber
	\|z(t_2)\|_{L^2}\le&\left\|U_B(t_2,t_1)z(t_1)\right\|_{L^2}+\int_{J_{2}}\left\|U_B(t_2,s)h_2\left(z(s)\right)\right\|_{L^2} \, ds\\
	\le&e^{-(\lambda_A-\sigma_2)(t_2-t_1)}\left\|z(t_1)\right\|_{L^2}+\int_{J_{2}}\left\|U_B(t_2,s)h_2\left(z(s)\right)\right\|_{L^2} \, ds\nonumber\\
	\le&e^{-(\lambda_A-\sigma_2)\tau_2}\left(e^{-(\lambda_A-\sigma_1)\tau_1}\varepsilon_0+\nu_1\right)+\nu_2\nonumber\\
	=&\left(e^{-(\lambda_A-\sigma_2)\tau_2}e^{-(\lambda_A-\sigma_1)\tau_1}\right)\varepsilon_0+e^{-(\lambda_A-\sigma_2)\tau_2}\nu_1+\nu_2=:\varepsilon_2.
\end{align}
In the last term of \eqref{eqn:z2}, the previous error estimate $\varepsilon_1$ does not appear.
Instead, $\varepsilon_0$, $\nu_1$, and $\nu_2$ are used for obtaining $\varepsilon_2$.
This is the essential point for avoiding the propagation of the previous estimate because we obtain the pointwise error estimate without the previous error estimate $\varepsilon_1$.
In the same way, we have
\begin{align*}
	\|z(t_3)\|_{L^2}\le&\left\|U_B(t_3,t_2)z(t_2)\right\|_{L^2}+\int_{J_{3}}\left\|U_B(t_3,s)h_3\left(z(s)\right)\right\|_{L^2} \, ds\\
	\le&e^{-(\lambda_A-\sigma_3)(t_3-t_2)}\left\|z(t_2)\right\|_{L^2}+\int_{J_{3}}\left\|U_B(t_3,s)h_3\left(z(s)\right)\right\|_{L^2} \, ds\\
	\le&e^{-(\lambda_A-\sigma_3)\tau_3}\left(\left(e^{-(\lambda_A-\sigma_2)\tau_2}e^{-(\lambda_A-\sigma_1)\tau_1}\right)\varepsilon_0+e^{-(\lambda_A-\sigma_2)\tau_2}\nu_1+\nu_2\right)+\nu_3\\
	=&\left(e^{-(\lambda_A-\sigma_3)\tau_3}e^{-(\lambda_A-\sigma_2)\tau_2}e^{-(\lambda_A-\sigma_1)\tau_1}\right)\varepsilon_0+\left(e^{-(\lambda_A-\sigma_3)\tau_3}e^{-(\lambda_A-\sigma_2)\tau_2}\right)\nu_1\\
	&+e^{-(\lambda_A-\sigma_3)\tau_3}\nu_2+\nu_3=:\varepsilon_3
\end{align*}
and, consequently, the desired estimate is
\begin{align}\label{eqn:zn}\nonumber
	\|z(t_n)\|_{L^2}\le&\left(e^{-(\lambda_A-\sigma_n)\tau_n}e^{-(\lambda_A-\sigma_{n-1})\tau_{n-1}}\cdots e^{-(\lambda_A-\sigma_1)\tau_1}\right)\varepsilon_0\\
	&+\left(e^{-(\lambda_A-\sigma_n)\tau_n}e^{-(\lambda_A-\sigma_{n-1})\tau_{n-1}}\cdots e^{-(\lambda_A-\sigma_2)\tau_2}\right)\nu_1\nonumber\\
	&+\left(e^{-(\lambda_A-\sigma_n)\tau_n}e^{-(\lambda_A-\sigma_{n-1})\tau_{n-1}}\cdots e^{-(\lambda_A-\sigma_3)\tau_3}\right)\nu_2\nonumber\\
	&+\cdots+e^{-(\lambda_A-\sigma_n)\tau_n}\nu_{n-1}+\nu_n=:\varepsilon_n.
\end{align}
By handling the inside of {all of the parentheses} in~\eqref{eqn:zn} first, we expect the error estimate to avoid the propagation of previous estimates.
These estimates are imitations of techniques for avoiding the wrapping effect in verification methods for ordinary differential equations~\cite{bib:kashikashi,bib:lohner,bib:Zgliczynski2002}.
In the actual computation, we first store the estimates $\varepsilon_0$ and $\nu_i$.
After that, we multiply these by the insides of the parentheses.
We then obtain the pointwise error estimate without the previous error estimate.
In Section \ref{sec:numericalexam}, we illustrate the efficiency of the proposed shrinking technique by numerical examples.

\begin{remark}
The estimate \eqref{eqn:zn} is always satisfied regardless of the positiveness or negativeness of $\lambda_A-\sigma_i$.
If $\lambda_A-\sigma_i<0$, the term  $e^{-(\lambda_A-\sigma_{i})\tau_{i}}$ increases the error estimate  due to $e^{-(\lambda_A-\sigma_{i})\tau_{i}}>1$.
In that case, to obtain an accurate error estimate, one should take sufficiently small $\tau_{i}$ so that $e^{-(\lambda_A-\sigma_{i})\tau_{i}}\approx1$ holds.
If we take a large step size, the error estimate may increase rapidly.
On the other hand, for a small step size, it may be difficult to continue the concatenation scheme for a long time.
Therefore, there is a trade-off between the length of the step size and the accuracy of the error estimate.
\end{remark}

\subsection{Method of Estimating $\nu_i$}
In this subsection, we introduce how to estimate $\nu_i$ in \eqref{eqn:nu_i}.
Setting $z_1=z$, and $z_2=0$ in Lemma~\ref{lem:Lipschitz}, we have the following estimate for a fixed $s\in J_i$:
\begin{align*}
	&\hspace{-30pt}\left\|h_i\left(z(s)\right)\right\|_{L^2}\\
	\le&~\left\|(\omega+z)^p-\omega^p-p\omega^{p-1}z\right\|_{L^2}+\left\|\omega^p-\partial_s\omega+\Delta\omega\right\|_{L^2}\\
	\le&~p(p-1)C_{2p,\alpha}^2\left\|\Delta_\mu^\alpha z\right\|_{L^2}^2\int_0^1\int_0^1\left(\|\omega\|_{L^{2p}}+\theta\eta C_{2p,\alpha}\left\|\Delta_\mu^\alpha z\right\|_{L^2}\right)^{p-2} \, d\eta\theta d\theta\\
	    &\quad+\left\|\omega^p-\partial_t\omega+\Delta\omega\right\|_{L^2}\\
	=&~p(p-1)C_{2p,\alpha}^2(s-t_{i-1})^{-p\alpha}e^{p\sigma_i(s-t_{i-1})}\left\|(s-t_{i-1})^\alpha e^{-\sigma_i(s-t_{i-1})}\Delta_\mu^\alpha z\right\|_{L^2}^2\\
	    &\quad \int_0^1\int_0^1\left\{(s-t_{i-1})^\alpha e^{-\sigma_i(s-t_{i-1})}\|\omega\|_{L^{2p}} \right. \\
	    & \qquad\qquad \left. +\theta\eta C_{2p,\alpha}\left\|(s-t_{i-1})^\alpha e^{-\sigma_i(s-t_{i-1})}\Delta_\mu^\alpha z\right\|_{L^2}\right\}^{p-2} \, d\eta\theta \, d\theta\\
	    &\quad +\left\|\omega^p-\partial_t\omega+\Delta\omega\right\|_{L^2}\\
	\le&~p(p-1)C_{2p,\alpha}^2(s-t_{i-1})^{-p\alpha}e^{p\sigma_i(s-t_{i-1})}\rho_i^2 \\
	&\quad \int_0^1\int_0^1\left\{(s-t_{i-1})^\alpha e^{-\sigma_i(s-t_{i-1})}\|\omega\|_{L^{2p}}+\theta\eta C_{2p,\alpha}\rho_i\right\}^{p-2} \, d\eta\theta \, d\theta+\delta_i,
\end{align*}
using the estimate $\left\|\omega^p-\partial_t\omega+\Delta\omega\right\|_{C(J_i;L^2(\Omega))}\le\delta_i$.
Therefore, the estimate \eqref{eqn:nu_i} is
\begin{align*}
	&\hspace{-20pt}\int_{J_{i}}\left\|U_B(t_i,s)h_i\left(z(s)\right)\right\|_{L^2} \, ds\\
	\le&~\int_{J_{i}}e^{-(\lambda_A-\sigma_i)(t_i-s)}\Bigg(p(p-1)C_{2p,\alpha}^2(s-t_{i-1})^{-p\alpha}e^{p\sigma_i(s-t_{i-1})}\rho_i^2\\
	    &\quad \int_0^1\int_0^1\left\{(s-t_{i-1})^\alpha e^{-\sigma_i(s-t_{i-1})}\|\omega\|_{L^{2p}}+\theta\eta C_{2p,\alpha}\rho_i\right\}^{p-2} \, d\eta\theta  \, d\theta+\delta_i\Bigg) \, ds\\
	\le&~p(p-1)C_{2p,\alpha}^2\rho_i^2
	\int_0^1\int_0^1\left\{(t_i-t_{i-1})^\alpha\|\omega\|_{C(J_i;L^{2p}(\Omega))}+\theta\eta C_{2p,\alpha}\rho_i\right\}^{p-2} \, d\eta\theta  \, d\theta\\
	    &\quad \int_{J_{i}}e^{-(\lambda_A-\sigma_i)(t_i-s)}(s-t_{i-1})^{-p\alpha}e^{p\sigma_i(s-t_{i-1})}ds+\delta_i\int_{J_{i}}e^{-(\lambda_A-\sigma_i)(t_i-s)} \, ds\\
	=&~p(p-1)C_{2p,\alpha}^2\rho_i^2
	\int_0^1\int_0^1\left\{\tau_i^\alpha\|\omega\|_{C(J_i;L^{2p}(\Omega))}+\theta\eta C_{2p,\alpha}\rho_i\right\}^{p-2} \, d\eta\theta  \, d\theta\\
	    &\quad \int_{J_{i}}e^{-(\lambda_A-\sigma_i)(t_i-s)}(s-t_{i-1})^{-p\alpha}e^{p\sigma_i(s-t_{i-1})} \, ds+\delta_i\left(\frac{1-e^{-(\lambda_A-\sigma_i)\tau_i}}{\lambda_A-\sigma_i}\right),
\end{align*}
where we have two cases for the estimate
\begin{align*}\label{eqn:kaizen}\nonumber
	&\hspace{-30pt}\int_{J_{i}}e^{-(\lambda_A-\sigma_i)(t_i-s)}(s-t_{i-1})^{-p\alpha}e^{p\sigma_i(s-t_{i-1})} \, ds\\[2mm]
	&\le\left\{
	\begin{array}{ll}
		\displaystyle \frac{e^{p\sigma_i\tau_i}\tau_i^{1-p\alpha}}{1-p\alpha}&\lambda_A-\sigma_i\ge0,\\[4mm]
		\displaystyle \frac{e^{((p+1)\sigma_i-\lambda_A)\tau_i}\tau_i^{1-p\alpha}}{1-p\alpha}&\lambda_A-\sigma_i<0.
	\end{array}\right.
\end{align*}
Then, if $\lambda_A-\sigma_i\ge0$ holds, we  take
\begin{align*}
	\nu_i=& ~p(p-1)C_{2p,\alpha}^2\rho_i^2
	\int_0^1\int_0^1\left\{\tau_i^\alpha\|\omega\|_{C(J_i;L^{2p}(\Omega))}+\theta\eta C_{2p,\alpha}\rho_i\right\}^{p-2} \, d\eta\theta \, d\theta \\
	& \quad \left(\frac{e^{p\sigma_i\tau_i}\tau_i^{1-p\alpha}}{1-p\alpha}\right)
	+\delta_i\left(\frac{1-e^{-(\lambda_A-\sigma_i)\tau_i}}{\lambda_A-\sigma_i}\right).
\end{align*}
Otherwise, in the case of $\lambda_A-\sigma_i<0$, we take
\begin{align*}
	\nu_i=& ~p(p-1)C_{2p,\alpha}^2\rho_i^2
	\int_0^1\int_0^1\left\{\tau_i^\alpha\|\omega\|_{C(J_i;L^{2p}(\Omega))}+\theta\eta C_{2p,\alpha}\rho_i\right\}^{p-2} \, d\eta\theta \, d\theta \\
	& \quad \left(\frac{e^{((p+1)\sigma_i-\lambda_A)\tau_i}\tau_i^{1-p\alpha}}{1-p\alpha}\right)
	+\delta_i\left(\frac{e^{(\sigma_i-\lambda_A)\tau_i}-1}{\sigma_i-\lambda_A}\right).
\end{align*}

\begin{remark}
Controlling $\tau_i$ is also necessary so that $\nu_i$ is as small as possible.
For example, we should take sufficiently small $\nu_i$ so that $e^{p\sigma_i\tau_i}\approx 1$ holds for the case of $\lambda_A-\sigma_i>0$ and that $e^{((p+1)\sigma_i-\lambda_A)\tau_i}\approx 1$ holds for the case of $\lambda_A-\sigma_i<0$.
\end{remark}

\section{Numerical Examples}\label{sec:numericalexam}

To illustrate the efficiency of our verification method, we show some numerical results.
Let $\Omega:=(0,1)^2$ be a unit square domain\footnote{It {can be proved} that $\lambda_{\min}=2\pi^2$ on $\Omega$.} in $\mathbb{R}^2$, and set $p=2$ in~\eqref{eqn:monJ} to
consider the semilinear parabolic equation so called the {\em Fujita-type} equation
\begin{equation}\label{eqn:fujitaeq}
\left\{
\begin{array}{ll}
	\partial_tu-\Delta u=u^2 & \mbox{in}~(0,T)\times\Omega,\\[1mm]
	u(t,x)=0,&t\in (0,T),~x\in\partial{\Omega},\\[1mm]
	u(t_0,x)=u_0(x),&x\in\Omega,
\end{array}
\right.
\end{equation}
where $T>0$ is a fixed time, $u_0(x)=\gamma\sin(\pi x_1)\sin(\pi x_2)$, and $\gamma$ is a parameter.
It is well-known that, for a sufficiently large $\gamma$, the solution of \eqref{eqn:fujitaeq} blows up\footnote{When $\gamma=27$, the approximate solution seems to blow up in finite time.} in finite time \cite{bib:Ferreira2012}.
When the scale of an approximate solution becomes large, it is difficult to verify the existence and local uniqueness of the exact solution using the verification method.
Hence, we consider that this problem is suite for a benchmark test for illustrating the accuracy of verification methods.

All computations are carried out on CentOS 6.3, Intel(R) Xeon(R) CPU E5-2687W@3.10 GHz, and MATLAB 2016b with INTLAB \cite{bib:intlab} version 9.
The approximate solution $\omega$ is
\[
	\omega(t,x) = \sum_{|m|\le N^2}\tilde{u}_m(t)\psi_m(x),
\]
where $m=(m_1,m_2)$ is a multi-index, and $\psi_m(x)=\sin(m_1\pi x_1)\sin(m_2\pi x_2)$ ($m_1,m_2=1,2,\dots,N$).
We define the finite dimensional subspace of $D(\Delta)$ as
\[
	V_N:=\left\{\sum_{|m|\le N^2}a_{m}\psi_m(x)~:~ a_{m}\in\mathbb{R}\right\}.
\]
Each $\tilde{u}_m(t)$ of $\omega$ is given by the Fourier-Galerkin procedure {\cite{bib:Canuto1988}} and the Crank-Nicolson scheme {\cite{bib:fujita2001,bib:thomee2013}} in the time variable.
Namely, for $0=t_0<t_1<\cdots<t_n=T$, we employ the full discretization scheme for obtaining $\left\{u_i\right\}_{i\ge1}\subset V_N$ such that
\[
	\left(\frac{u_i-u_{i-1}}{\tau_i},\phi_N\right)_{L^2}-\frac{1}{2}\left(\Delta(u_i+u_{i-1}),\phi_N\right)_{L^2}=\frac{1}{2}\left(u_i^2+u_{i-1}^2,\phi_N\right)_{L^2},~~\forall \phi_N\in V_N.
\]
Let $\hat u_i\in V_N$ ($i=0,1,\dots,n$) be the numerical approximation of $u_i$.
The approximate solution $\omega$ is constructed of $\omega(t,x)=\sum_{i=0}^n\hat u_i(x)l_i(t)$, where $l_i(t)$ is the linear Lagrange basis {\cite{bib:Kress1998}} satisfying $l_i(t_j)=\delta_{ij}$ for $j=0,1,\dots,n$ (Kronecker's delta).
Furthermore, because each $\hat u_i(x)$ is described by $\hat u_i(x)=\sum_{|m|\le N^2}\tilde u_{i,m}\psi_m(x)$, 
\[
	\tilde{u}_m(t):=\sum_{i=0}^n\tilde u_{i,m}l_i(t).
\]

\begin{figure}[ht]
\centering
\includegraphics[width=\textwidth]{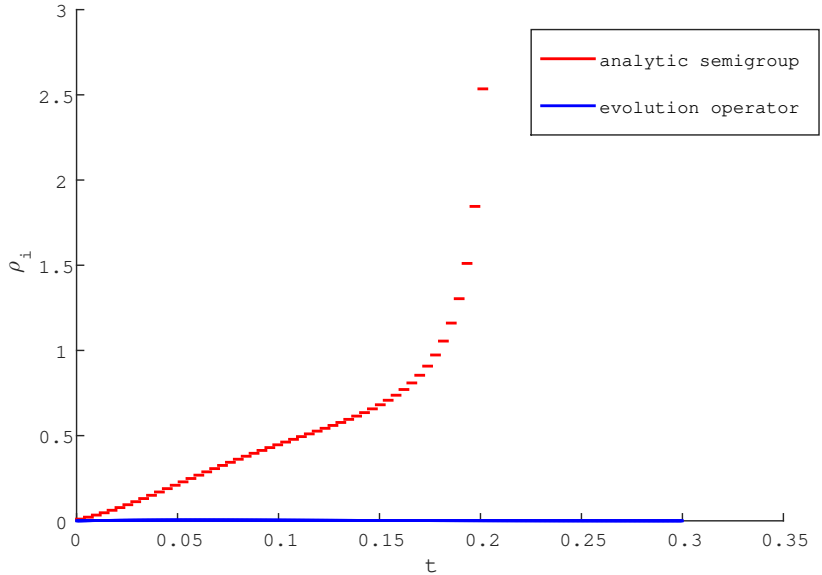}
\caption{Comparison with the previous verification method \cite{bib:Mizuguchi2017} using the analytic semigroup; $\gamma=6.8$, $\alpha=1/2$ (analytic semigroup), $\alpha=3/8$ (evolution operator), $\mu=70$, and $N=5$.
Each $\rho_i$ is plotted versus $t$. We stopped the concatenation scheme using the evolution operator at $T=0.3$.}
\label{fig:fig1}
\end{figure}

The first result is compared with the previous verification method~\cite{bib:Mizuguchi2017} using the analytic semigroup generated by $\Delta$.
Figure \ref{fig:fig1} displays each radius of $B_{J_i}(\omega,\rho_i)$ in which the mild solution of \eqref{eqn:fujitaeq} is locally enclosed.
The result of inclusion using the evolution operator is more accurate than that of the previous one (using analytical semigroup) because the concatenation scheme succeeds in enclosing the mild solution for a long time.
In particular, for the previous verification method, the accumulation of the error estimate causes the failure in enclosing the mild solution at $t=0.203125$.
On the other hand, the concatenation scheme based on the evolution operator can continue the numerical verification without the accumulation of the error estimate.

\begin{figure}[ht]
\centering
\includegraphics[width=\textwidth]{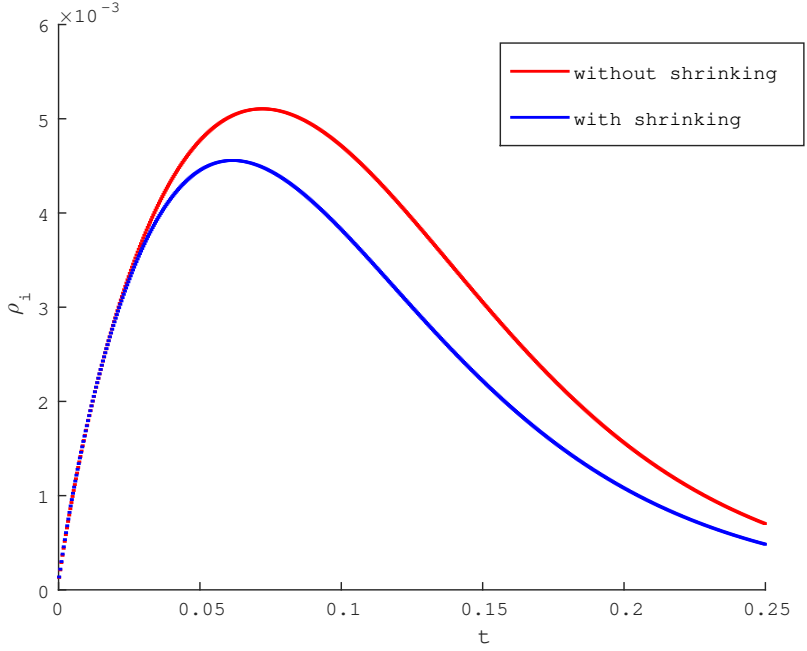}
\caption{The efficiency of the shrinking technique; $\gamma=7$, $\alpha=3/8$, $\mu=70$, and $N=5$. Each $\rho_i$ is plotted versus $t$. The concatenation scheme is stopped at $T=0.25$.}
\label{fig:meritofshrinking1}
\end{figure}

\begin{figure}[ht]
\centering
\includegraphics[width=\textwidth]{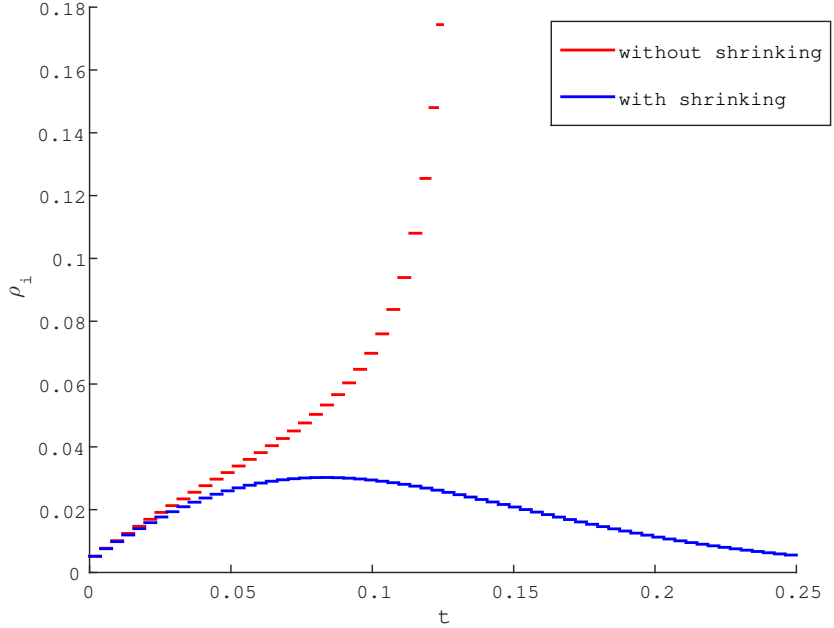}
\caption{The efficiency of our shrinking technique by taking a rough step size; $\gamma = 7$, $\alpha = 3/8$, $\mu = 70$, and $N = 5$. Each $\rho_i$ is plotted versus $t$. In the result without shrinking, the accumulation of the error estimate occurs.  The concatenation scheme with our shrinking technique is stopped at $T = 0.25$.
}
\label{fig:meritofshrinking2}
\end{figure}

Next, we illustrate the efficiency of the shrinking technique using the pointwise error estimate discussed in Section~\ref{sec:shrinkingtech}.
Figures~\ref{fig:meritofshrinking1} and~\ref{fig:meritofshrinking2} show that the shrinking technique can control the propagation of the previous estimate to some extent.
In Figure~\ref{fig:meritofshrinking1}, the error estimate $\rho_i$ using our shrinking technique is slightly larger than that without using shrinking technique for first several steps.
After that, the estimate becomes tighter than that without using our shrinking technique.
This implies that the shrinking technique reduces propagation of previous excess widths.
Furthermore, in Figure \ref{fig:meritofshrinking2}, if we take a rough step size, the propagation of the previous estimate causes failure in enclosing the mild solution at $t=0.12482732616559$.
Such a failure does not occur in the result with the shrinking technique,
demonstrating the effectiveness of the shrinking technique.
As a result, the concatenation scheme succeeds in enclosing the mild solution for a long time.

\begin{figure}[ht]
\centering
\includegraphics[width=\textwidth]{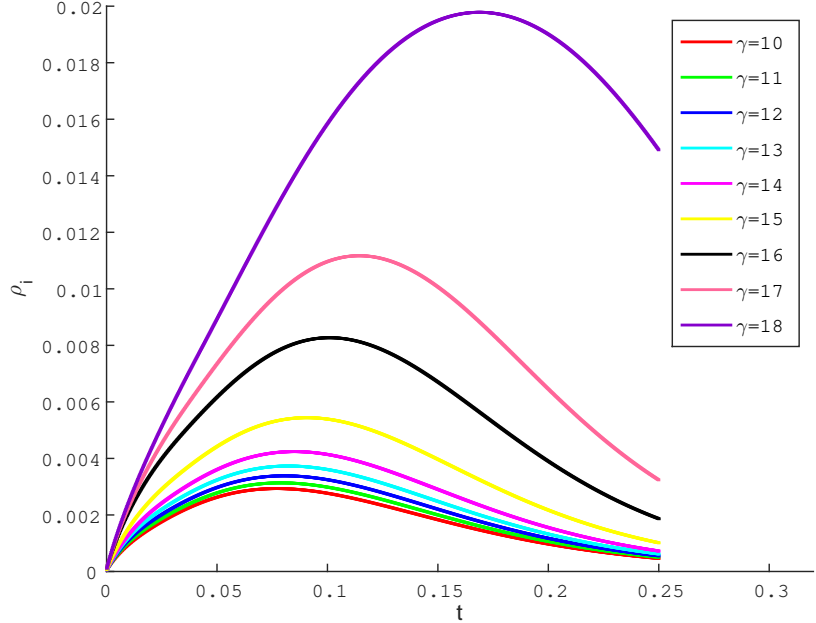}
\caption{Results of the concatenation scheme varying $\gamma = 10, 11, \ldots, 18$ ($\alpha = 3/8$, $\mu = 600$\,{\footnotesize $(\gamma < 15)$}, $550$\,{\footnotesize $(\gamma = 15)$}, $350$\,{\footnotesize $(\gamma > 15)$},~$N = 11$). Each $\rho_i$ is plotted versus $t$. The concatenation scheme succeeds in enclosing the mild solution of~\eqref{eqn:fujitaeq} until $T = 0.25$.}
\label{fig:fig4}
\end{figure}

Finally, we display the results of the concatenation scheme varying $\gamma = 10$, $\ldots$, $18$ in Figure~\ref{fig:fig4}.
For each $\gamma$, the concatenation scheme succeeds in enclosing the mild solution of~\eqref{eqn:fujitaeq} until at least $T = 0.25$.
In this example, the choice of the shift value $\mu$ is the key to the success of numerical verification.
For example, if we set $\mu = 600$ in the case of $\gamma = 15$, the verification cannot succeed as far as $T = 0.25$.
In such a case, the error estimate accumulates, and the condition of the local inclusion is not satisfied.
This implies that there exists an optimal shift value $\mu$ that depends probably on both $\gamma$ and $N$.
In the actual computation, we experiment to find the appropriate value of $\mu$.
Furthermore, when $\gamma = 19$, the verification scheme cannot enclose the mild solution as far as $T = 0.25$.
In this case, a more accurate approximate solution is necessary, e.g., increasing the number of bases to $N = 15$ or more.
On the other hand, more computational resources are needed for the numerical verification using such an accurate approximate solution.

\section{Conclusion}

In this paper, we have discussed an accurate method for guaranteeing the existence and local uniqueness of mild solutions of semilinear heat equations.
Our method consists of a fixed-point formulation using the {\em evolution operator} instead of the analytic semigroup; the {\em pointwise error estimate} by rearranged computing for shrinking the propagation of over-estimates; and the {\em concatenation scheme} to extend the time interval in which the mild solution is enclosed.
As a result, compared with the previous result (using the analytic semigroup), our method can enclose the mild solution for a long time.
We have also provided numerical results to illustrate the efficiency of our verification method. 

We conclude this paper by commenting on potential extensions.
Our method could be extended to more general nonlinearity when $f(u)$ is a mapping from $\mathbb{R}$ to $\mathbb{R}$ such that $f\circ u\in L^2(\Omega)$ holds for each $u\in H^1_0(\Omega)$.
In addition, we assume that 
\begin{itemize}
\item the mapping $f$ is Fr\'echet differentiable {(see, e.g, \cite{bib:Muscat2014})} in the sense that $f$ is an operator from $H^1_0(\Omega)$ to $L^2(\Omega)$, and
\item there exists a monotonically non-decreasing function $L_{\omega}:[0,\infty)\to [0,\infty)$ corresponding to the first Fr\'echet derivative {\cite{bib:Muscat2014}} of $f$ such that
\[
	\left\|(f'[\omega+z]-f'[\omega])\phi\right\|_{L^2}\le L_{\omega}\left(\rho\right)\rho\|\Delta_\mu^\alpha\phi\|_{L^2}
\]
for any $\phi\in D\left(\Delta_\mu^\alpha\right)$, each $t\in J$, and
$z\in B_J(0,\rho)$ for a certain $\rho>0$ defined in Section \ref{sec:localinc}.
\end{itemize}
Furthermore, the space domain $\Omega$ could be generalized to a convex polyhedral domain in $\mathbb{R}^d$ ($d=1,2,3$) by using a finite element method.

\section*{Acknowledgements}
The authors express their sincere gratitude to Prof. M. Kashiwagi in Waseda University for his valuable comments and kind remarks.
In particular, his essential suggestion of the shrinking technique for the pointwise estimate was beneficial.
The authors would also like to express their gratitude to the two anonymous referees for providing comments that improve this paper.

\bibliographystyle{plain}
\bibliography{evolutionop}
\end{document}